\newcommand{\undersetbrace}[2]{\ensuremath{\underset{\mathlarger{#1}}{\mathsmaller{\underbrace{#2}}}}}
\newcommand{\figureref}[1]{Figure \ref{#1} (page \pageref{#1})} 
\DeclareMathOperator{\LHS}{LHS} 
\DeclareMathOperator{\RHS}{RHS} 
\DeclareMathOperator{\ErrorTerms}{ErrorTerms}
\DeclareMathOperator{\rot}{rot} 
\DeclareMathOperator{\SO}{SO} 
\DeclareMathOperator{\AmmannChair}{AmmannChair} 
\DeclareMathOperator{\TubingenTriangle}{TubingenTriangle} 
\DeclareMathOperator{\Ulam}{Ulam}
\newcommand{\bb}{\mathbb}
\newcommand{\R}{\bb R}
\newcommand{\TilingPoint}[2]{\begin{bmatrix}#1 \\ #2\end{bmatrix}} 
\tikzset{line/.style={thick,black}}
\tikzset{linered/.style={thick,red}}
\definecolor{tan}{RGB}{203,174,84}
\definecolor{darktan}{RGB}{148,124,53}
\definecolor{tanbrown}{RGB}{117,98,42}
\definecolor{chairblue}{RGB}{43,136,231}
\definecolor{chairdarkblue}{RGB}{16,81,148}
\definecolor{chairlightblue}{RGB}{122,174,227}
\numberwithin{figure}{section}
\theoremstyle{plain} 
\newtheorem{theorem}{Theorem}
\newtheorem{prop}[theorem]{Proposition}
\newtheorem{lemma}[theorem]{Lemma}
\numberwithin{theorem}{section}
\theoremstyle{remark} 
\theoremstyle{definition} 
\newtheorem{example}[theorem]{Example}
\newtheorem{remark}[theorem]{Remark}
\theoremstyle{definition}
\newtheorem*{theorem*}{Theorem}
\begin{document}

\title{Pair Correlation and Gap Distributions for Substitution Tilings and
       Generalized Ulam Sets in the Plane} 
\author{
        Maxie D. Schmidt \\ \href{mailto:maxieds@gmail.com}{maxieds@gmail.com} \\ 
        \href{mailto:mschmidt34@gatech.edu}{mschmidt34@gatech.edu} \\ 
        Georgia Institute of Technology \\ School of Mathematics \\ 
        Atlanta, GA 30332
       } 
\date{}

\maketitle 

\begin{abstract} 
We study empirical statistical and gap distributions of several important 
tilings of the plane. In particular, we consider the slope distributions, 
the angle distributions, pair correlation, squared-distance pair correlation, 
angle gap distributions, and slope gap distributions for the 
Ammann Chair tiling, the recently discovered fifteenth pentagonal tiling, and 
a few pertinent tilings related to these famous examples. 
We also consider the spatial statistics of generalized Ulam sets in two dimensions. 
Additionally, we carefully prove a tight asymptotic formula for the time steps in which 
Ulam set points at certain prescribed geometric positions in their plots in the plane 
formally enter the recursively-defined sets. 

The software we have developed to these generate numerical approximations to the 
distributions for the tilings we consider here is 
written in Python under the Sage environment and is released as
open-source software which is available freely on our websites. 
In addition to the small subset of tilings and other point sets in the plane 
we study within the article, our 
program supports many other tiling variants and is easily extended for 
researchers to explore related tilings and iterative sets. \\ \\ 
\emph{Keywords}: substitution tiling; Ammann chair; Ulam set; directional distribution; 
                 gap distribution; pair correlation. \\ 
\emph{MSC Subject Class (2010)}: 52C20; 06A99; 11B05; 62H11; 52C23. \\
\emph{Last Revised}: \today
\end{abstract} 

\section{Introduction} 

The study of the spatial statistics of various point sets in the plane, and 
in particular the use of these statistics as a comparison to 
random processes, is by now well-established 
(for example, see~\cite{Baake} and the references within). 
In this paper, we describe empirical statistics for various families of 
self-similar and pentagonal tilings, focusing in particular on two 
well-known (families of) tilings: special chair tilings, such as the 
Ammann chair, and the 
numbered Pentagonal tilings in 
Section \ref{Section_AmmannChair} and 
Section \ref{Section_pentagonal_tilings}, respectively. 
Additionally we consider generalized forms of Ulam sets in two dimensions, their spatial statistics, and formalize other properties 
related to these sets in Section \ref{Section_UlamSets}. 
We hope that our work and the software we provide inspires more theoretical work on the study of limiting distributions for these point sets, and in particular proofs of existence and formulas for limiting gap distributions and pair correlation functions for angles, slopes, and distances.

\subsection{A unified perspective on two-dimensional substitution tilings and Ulam sets} 

We begin by unifying perspectives on substitution tilings and two-dimensional Ulam sets, 
and their directional distribution statistics. In both situations we have a nested sequence of 
recursively-generated, or recursively-defined finite sets $A_n$, where nested means 
$$A_0 \subset A_1 \subset \cdots \subset A_{n-1} \subset A_n.$$ 
We are interested in the limiting distribution of directions and the pair 
correlation plots in the sets $A_n$ as $n \rightarrow \infty$. 
The next two subsections define the specific classes of substitution tilings and 
two-dimensional Ulam set variants we consider within this article. 
In Section \ref{subSection_Intro_DefsStats} below we define the specific forms of the 
directions and statistical plots which we compare in the next few sections. 

\subsubsection{Substitution tilings} 

We start with a polygon $P$ (with a vertex at the origin $0$) which as a 
decomposition into pieces $P_1, \ldots, P_k$, each of which is affinely similar 
to the original polygon $P$, that is, there are affine maps $g_i$ so that 
$g_i(P) = P_i$ and $$P = \bigcup_{i=1}^k P_i.$$ A map $g: \R^2 \rightarrow \R^2$ 
is affine if $$g(x) = hx + v,$$ where $h \in GL(2, \R)$ and $v \in \R^2$. $A_0$ 
is the set of vertices of $P$, $A_1$ the set of vertices of $P_1, \ldots P_k$. 
Now we can decompose $$P_i = \bigcup_{j=1}^k g_i(P_j),$$ the sets of vertices of 
$g_i(P_j), i, j = 1, \ldots, k$ forms $A_2$, and so on, at each stage decomposing 
each piece into smaller pieces. 
For example, if we take the square $[0,1]^2$ and its decomposition into quarters, 
we get finer and finer square grids, equivalently we could look at larger and 
larger subsets of the integer lattice. In this case, we know the distribution of the 
slope gaps of this complete ``tiling'' as Hall's distribution. 

\subsubsection{Ulam sets} 
\label{subsubSection_UlamSet_Constructions}

In this case, we start with $A_0$ given by a pair of vectors, 
$$A_0 = \{v_0, v_1\}, A_1 = \{v_0, v_1, v_0+v_1\}, $$ and to build 
$A_{n+1}$ from the set $A_n$ we add the shortest of the vectors by their Euclidean norms 
which can be written uniquely as the sum of two vectors in $A_n$. 
Note that this set will be in the wedge defined by $v_0$ and $v_1$. We let $A_{\infty} = \bigcup A_n$, and refer to this as the Ulam set associated to $v_0, v_1$. 
In our special cases where we have only two initial vectors, 
we have by Kravitz-Steinerberger~\cite[Theorem 1]{ULAMSEQ-ULAMSETS} that these Ulam sets
correspond to the following subset of the lattice points $\{mv_0 + nv_1: m, n \in \mathbb \mathbb Z \cup \{0\} \}$ in the  wedge: boundary points $v_0 + n v_1$ and $n v_0+v_1$ for natural numbers 
$n \geq 0$ and  inner points $m v_0 + n v_1$ for $m,n \geq 3$ both odd positive integers 
(see Section \ref{subSection_UlamSets_defs_and_GeomProps}). 

These points are filled in \emph{gradually} at different finite time steps: 
for $a, b \in \mathbb Z \cup \{0\}$ with $av_0 + bv_1 \in A_{\infty}$, let $K(a,b)$ denote the time of its appearance, $$K(a,b) = \min \{ n: av_0 + bv_1 \in A_n\}.$$
Consider the $n^{th}$ line segment $$L_n = \{ xv_0 + yv_1: x, y \geq 0, x+ y = n+1\}$$ 
consisting of points in the Ulam set $A_{\infty}$ on the segment between $nv_0+v_1$ and $v_0+nv_1$. 
In \S\ref{Section_UlamSets} we prove a refinement of~\cite[Theorem 1]{ULAMSEQ-ULAMSETS} in the context of our so-termed ``\emph{timing distributions}'' which we define precisely in 
Section \ref{Section_UlamSets}. 
In particular, we state and prove a more exact form of the next result which succinctly 
summarizes the statement of our key theorem proved in the sections below. 

\begin{theorem*}[Timing Distributions] 
\label{theorem:ulam} 
There are positive constants $C_1, C_2$ such that for any 
$av_0 + bv_1 \in A_{\infty} \cap L_n,$ we have that 
$$C_1 \cdot n^2 \le K(a,b) \le C_2 \cdot n^2.$$ 
\end{theorem*}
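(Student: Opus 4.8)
The plan is to reduce the timing bounds to a lattice-point count governed by a single ordering principle. First I would record two elementary reductions. Since exactly one vector is added in passing from $A_n$ to $A_{n+1}$, we have $|A_n| = n+2$, so $K(a,b)$ equals, up to the additive constant $2$, the number of points of $A_\infty$ added no later than $w := av_0+bv_1$; thus it suffices to control how many points precede $w$. Second, because $v_0,v_1$ are linearly independent, the Euclidean norm is comparable to the coordinate sum $s(a,b):=a+b$ on the closed wedge: there are constants $0<c_1\le c_2$ with $c_1\,s(a,b)\le \|av_0+bv_1\|\le c_2\,s(a,b)$ for all $a,b\ge 0$. In particular a point of $A_\infty\cap L_n$ has $s=n+1$ and norm $\Theta(n)$. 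A standard lattice count then gives that the number of points of $A_\infty$ with $s\le S$ is $\Theta(S^2)$: the inner points are the both-odd index pairs, a density-$\tfrac14$ subset of the lattice triangle $\{a+b\le S\}$, while the two boundary rays contribute only a lower-order $O(S)$ term.

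The structural heart is a uniqueness lemma extracted from the Kravitz--Steinerberger characterization: \emph{every} point of $A_\infty$ has a unique representation as a sum of two elements of $A_\infty$, and both summands have strictly smaller coordinate sum. Concretely, I would show from the oddness description that the boundary points satisfy $v_0+nv_1 = v_1 + (v_0+(n-1)v_1)$ and $nv_0+v_1 = v_0 + ((n-1)v_0+v_1)$, while each inner point $w=av_0+bv_1$ with $a,b$ odd and $\ge 3$ factors uniquely as the sum of the two boundary points $((a-1)v_0+v_1)+(v_0+(b-1)v_1)$; that no other $A_\infty$-representation exists follows by parity, since an $(\mathrm{odd},\mathrm{odd})$ index pair cannot be written as a sum of two inner pairs, nor of an inner and a boundary pair, leaving only this boundary--boundary splitting. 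This uniqueness is exactly what makes the generating rule tractable: a point is \emph{addable} at a given stage if and only if its single pair of summands is already present, so ``not addable'' means precisely that a summand of strictly smaller coordinate sum is still missing.

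With these in hand, both bounds come from one Ordering Lemma proved by infinite descent on $s$. Set $\kappa:=c_2/c_1$. I claim that for any $x\in A_\infty$, every $z\in A_\infty$ with $s(z)\le s(x)/\kappa$ is added strictly before $x$. Indeed $s(z)\le s(x)/\kappa$ forces $\|z\|\le c_2 s(z)\le c_1 s(x)\le \|x\|$, so if such a $z$ were still absent at the moment $x$ is added it could not be addable, as otherwise the shorter vector $z$ would be chosen in preference to $x$; by the uniqueness lemma some summand $z'$ of $z$ would then also be absent, with $s(z')<s(z)\le s(x)/\kappa$. Choosing among all offending $z$ one of minimal coordinate sum yields a contradiction. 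Applying this to $x=w$ shows that every $A_\infty$-point with $s\le s(w)/\kappa=\Theta(n)$ precedes $w$, and there are $\Theta(n^2)$ such points, giving $K(a,b)\ge C_1 n^2$. For the upper bound, if some $u$ with $s(u)\ge \kappa\, s(w)$ were added before $w$, then applying the lemma to $x=u$, where $s(w)\le s(u)/\kappa$, would force $w$ to precede $u$, a contradiction; hence every point added before $w$ has $s<\kappa\,s(w)$, of which there are only $O(n^2)$, giving $K(a,b)\le C_2 n^2$.

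I expect the main obstacle to be the uniqueness lemma rather than the counting or the descent: one must verify from the explicit both-odd characterization that no inner point admits a second $A_\infty$-decomposition, and one must be slightly careful about ties in Euclidean norm, handled either by genericity of $v_0,v_1$ or by the deterministic tie-breaking built into the generating rule, so that the phrase ``shorter vectors are chosen first'' in the descent is literally valid. A secondary point is that the comparability constants $c_1,c_2$, and hence $C_1,C_2$, may depend on $v_0,v_1$; since the wedge angle may be obtuse, a summand can be longer than the point it helps build, so one cannot use the norm as the monotone potential in the descent, which is precisely why the Ordering Lemma is phrased in terms of the coordinate sum $s$ throughout.
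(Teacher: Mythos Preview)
Your argument is correct and takes a genuinely different route from the paper's. The paper proceeds through a chain of explicit geometric lemmas: it locates the minimum- and maximum-norm vectors on each segment $L_n$ (Lemma~\ref{lemma_MinMagVectors_OnLineSegment}), uses this to show that $L_n$ is completely filled before $L_{n+1}$ with at most $O(n^2)$ points of overshoot into later segments (Proposition~\ref{prop_PointEntryTimes_NthLineSegment}), and separately bounds the number of vectors added at any single step by a constant $C_{v_0,v_1}$ depending on the wedge (Lemma~\ref{lemma_UpperBoundOnNumPointsAdded_AtStepN}); the $\Theta(n^2)$ count then follows by summing $\#_{\Ulam}(k)$ over $k\le n$. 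You replace all of this with a single Ordering Lemma proved by infinite descent on the coordinate sum, whose engine is the uniqueness of $A_\infty$-representations, which you extract by parity from the Kravitz--Steinerberger characterization. This is more conceptual: you never need to identify the extremal-norm vector on $L_n$, argue that segments fill in order, or bound simultaneous additions. The paper's approach, in exchange, yields a finer picture of the filling dynamics and more explicit constants.

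One point to flag: your reduction $|A_n|=n+2$ assumes exactly one vector is added per step, which matches the paper's informal description in the introduction but not its formal definition in \S\ref{subSection_UlamSets_defs_and_GeomProps}, where all uniquely-representable vectors of minimal norm are added simultaneously. In that model your Ordering Lemma still holds with conclusion $K(z)\le K(x)$ rather than strict inequality, and your upper bound on $K(a,b)$ goes through unchanged since at least one point is added per step; but the lower bound then needs the paper's Lemma~\ref{lemma_UpperBoundOnNumPointsAdded_AtStepN} to convert the $\Theta(n^2)$ predecessor count into a step count. You correctly anticipate this tie issue; for generic $v_0,v_1$ it does not arise, and your descent, phrased in terms of $s$ rather than $\|\cdot\|$, handles the obtuse-wedge subtlety that a summand may be longer than the vector it builds.
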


To our knowledge the considerations of such timing distributions in the context of generalized 
Ulam sets are new and have not been considered elsewhere or in the references. 
The generalizations of the one-dimensional Ulam sequence to sequences of vectors in the plane 
is a fairly recent construction made by Kravitz and Steinerberger in 2017. 
Even fully classifying the plot structure, or inclusion properties of the lattice spanned by the 
initial vectors for $k \geq 3$ specified initial conditions remains an open problem. 
Thus there is much left to explore about the properties and graphs of these generalized 
Ulam sets in the plane, and our new results on the timing distributions of these sets is only 
a start to many other properties which we can approach by geometric arguments concerning these sets. 

\subsection{Definitions and statistical plots considered within the article} 
\label{subSection_Intro_DefsStats}

\subsubsection{Directions} 

Given such a sequence of subsets of $\R^2$, we consider various statistics 
associated to the \emph{directions} of points in $A_n$. For both Ulam and 
substitution, note that the set of possible angles (and slopes) is determined 
by the original set (either the polygon or the initial two vectors). So the set of 
directions (either slopes or angles) is contained in some interval $[a, b]$ 
(where for slopes we could have $a = -\infty$ or $b= \infty$, though often in 
examples we can use symmetry to reduce our set, for example, with the square 
substitution/integer grid, we can look at things of slope between $0$ and $1$). 

\subsubsection{Gap distributions and spatial statistics} 

For $x \in \R^2$, let $x = (x^{(1)}, x^{(2)})$, and consider the slope and 
angle $$s(x) = \frac{x^{(2)}}{x^{(1)}}, \theta(x) = \arctan s(x).$$ 
Write $$A_n = \{x_1, \ldots x_N\}$$ where the resulting $$s_k = s(x_k), \theta_k = \theta(x_k)$$ 
are in increasing order. Of course $N = N_n$ depends on $n$. 
We are interested in (among other things) the limiting histogram distributions of the 
next sets and the following underlying questions about these distributions:
\begin{description}

\item[Equidistribution of slopes.] 

\noindent
For $[c, d] \subset [a,b]$, does 
$$ \lim_{n \rightarrow \infty} \frac{ \#\{1 \le k \le N_n: s_k \in [c,d]\}}{N_n} = \frac{d-c}{b-a}\text{?}$$

\item[Gap distributions.] 

\noindent 
Let $g_i = N_n(s_{i+1} - s_i)$ (slope gaps) or $g_i = N_n(\theta_{i+1} - \theta_i)$ (angle gaps). 
For $[c, d] \subset [0, \infty]$, does $$ \lim_{n \rightarrow \infty} \frac{ \#\{1 \le k \le N_n: g_k \in [c,d]\}}{N_n}$$ exist? 

\item[Pair correlation.] 

\noindent 
For $[c, d] \subset [0, \infty]$, does 
$$ \lim_{n \rightarrow \infty} \frac{ \#\{1 \le j < k \le N_n: N_n(x_k - x_j) \in [c,d]\}}{N_n}$$ exist? 

\end{description}

\subsubsection{Some notes on obtaining limiting distributions}
\label{special} 
Gap distributions and pair correlation functions have been widely studied for various families of subsets of the plane, often arising from connections with low-dimensional dynamical systems and number theory. These include lattices, affine lattices, sets of saddle connections on translation surfaces (often arising from billiards in polygons), and more recently, cut-and-project quasicrystals, see, for example~\cite{Athreya, AChaika, GAP-DIST-SLOPES-GOLDENL, ACheung, Elkies, MS, BCZ, BZsurvey, MSquasi, UyanikWork, Work}.
The methods of proof in these results vary, but a common thread in 
several is using dynamics and equidistribution of unipotent flows on an 
appropriate moduli space which parameterizes deformations of our point sets 
(see~\cite{Athreya} for a general theorem regarding this strategy). 
In particular, this strategy is particularly effective when the original set has a large linear symmetry group.

The equidistribution of a sequence of finite lists $F(k)$ (here, the slopes or angles of our point sets $A_k$) can be viewed as a first-order test for randomness. In general, one considers the \emph{normalized gap sets}
$$
G(k) : = \left\{ N_k\left(F_k^{(i+1)}- F_k^{(i)}\right): 0 \le i < N(k)\right\},
$$
and given $0 \le a < b  \le \infty$, we consider the behavior of the proportion of gaps between $a$ and $b$, given by
$$\lim_{k \rightarrow \infty} \frac{\left| G(k) \cap (a, b)\right|}{N_k}.$$
If the sequence $F(k)$ is \emph{truly random}, that is, given by $$F(k) = \{ X_{(0)} \le X_{(1)} \le \ldots \le X_{(k)}\},$$ where the $\{X_{(i)}\}$ are the order statistics generated by independent, identically distributed (i.i.d.) uniform $[0, 1)$ random variables $\{X_n\}_{n=0}^{\infty}$, the gap distribution has converges to a \emph{Poisson process} of intensity $1$. Precisely, for any $t>0$,
\begin{equation}\label{eq:pois}
\lim_{k \rightarrow \infty} \frac{\left| G(k) \cap (t, \infty)\right|}{N_k} = e^{-t}
\end{equation}
In many of our examples, the empirical limiting gap distribution appears to be \emph{not} Poissonian, thus giving some indication of the underlying `non-random' structure of the sets.

\subsection{New results and conjectures} 

\subsubsection{Conjectures and new observations} 

Our experimental project provides the results of numerical computations 
performed on the \emph{Sage Math Cloud} servers of the 
empirical distributions for the 
pair correlation, tiling point angles, tiling point slopes, and the 
corresponding gap distributions of the angles and slopes for more than $40$ 
well-known tilings of the plane \cite{PROJECT-WEBSITE,TILINGS-ENCYCLOPEDIA}. 

We predict that the individual angle and slope distributions equidistribute for 
sufficiently large $N$ (or equivalently points within some large radius $R < \infty$). 
We also pose several conjectures on the smoothness of the limiting 
distributions of the gap and pair correlation distributions of the chair and 
pentagon tilings we consider in 
Section \ref{Section_Chair-Related_Tilings} and in 
Section \ref{Section_pentagonal_tilings}. 
We expect that due to the inwards recursive nature (deflation of subtiles rather 
than the scaling of inflation) we use to generate successive plots 
of the chair tiling examples that the 
pair correlation plots should be center-heavy with a monotone decrease 
towards either extreme at the tails of the distribution. 
That the limiting distributions for the gap and pair correlation plots 
appear to converge to smooth, piecewise continuous curves for large $N$ 
hints at more of the underlying structure of these important and interesting 
tiling variants. 
Given the interest of the chair tilings with respect to the group actions of 
$\mathbb{Z}^2$ and $\mathbb{R}^2$ on these tiling spaces 
(see, for example, \cite{TBLCHAIR-GRPACTIONS,TOP-TILINGSPCS}), we also 
suggest the computations of the exact empirical distributions corresponding 
to these tilings as the subject of another computational research article. 

In Section \ref{subsubSection_Comp_AC_SConnGL_TT}, 
we also connect the similarities between the gap distributions for the 
\texttt{AmmannChair} tiling, the saddle connections on the golden L 
studied in \cite{GAP-DIST-SLOPES-GOLDENL}, and the 
\texttt{TubingenTriangle} tiling to the symmetry groups for these 
distinct tilings, which see are substantial for these infinite tiling sets. 
In particular, we make observations on the 
orbits of these tiling sets under the matrices from the 
\emph{Hecke $(2, 5, \infty)$ triangle group}, which from the proofs given 
in the references \cite{Athreya,GAP-DIST-SLOPES-GOLDENL} 
are immediately suggested by the similarities of the 
slope gap distributions for the these tilings. Here, we consider the 
saddle connections on the golden L to be a set of 
``tiling points`` within our software implementation, so we henceforth interchangeably 
refer to infinite set of these vectors as the \texttt{SaddleConnGoldenL} 
``tiling`` within the article.

\subsubsection{Theoretical results for pair correlation of substitution tilings} 

Most of the results on pair correlations and gap distributions within this article are purely numerical and 
computational in nature and are used to suggest underlying theoretical 
results on the statistical and spatial distributions for the tilings we 
consider within the article. 
The exception is a numerically-verified theoretical 
analysis of the pair correlation distributions for the special 
Ammann chair tiling given in Section \ref{Section_AmmannChair}. 
In this section we recursively compute the pair correlation between points 
in the tiling by distances between tilings points reflected in all four 
quadrants. The stacked probability distribution function 
subplots reflecting this recursion are 
shown in Figure \ref{figure_AmmannChair_StackedPCPlots}, 
which provides computational reassurance that our theoretical methods are 
correct. 
The recursive procedure we introduce by special case analysis in this section 
can be applied to the cases of other substitution tilings. 

\subsection{Tilings supported by our new software tools} 

Our software project websites are located online at the following links 
\cite{PROJECT-WEBSITE}: 
\begin{itemize} 
     \setlength{\itemsep}{-1mm} 
\item \textbf{Tiling Gap Distributions and Pair Correlation Project: } \\ 
\url{http://www.math.washington.edu/wxml/tilings/index.php} \\ 
\url{https://github.com/maxieds/WXMLTilingsHOWTO} 

\item \textbf{Ulam Sets GitHub Repository: } \\ 
\url{https://github.com/maxieds/Ulam-sets} 

\end{itemize}
It is an important, integral part of the new computational results and the 
new software tools we provide in our experimental mathematics project 
which provides the computed empirical distributions and a software API for 
over $40$ well-known tilings, most of which, 
with the exception of the pentagonal tilings we consider in 
Section \ref{Section_pentagonal_tilings}, 
are found in the \emph{Tilings Encyclopedia} 
\cite{TILINGS-ENCYCLOPEDIA} \footnote{ 
     Other supported substitution and pentagonal tiling variants available in our 
     software implementation include the following tilings: 
     \texttt{AmmannA3}, \texttt{AmmannA4}, \texttt{AmmannChair}, 
     \texttt{AmmannChair2}, \texttt{AmmannOctagon} (Ammann-Beenker), 
     \texttt{Armchair}, \texttt{Cesi}, \texttt{Chair3}, \texttt{Danzer7Fold}, 
     \texttt{DiamondTriangle}, \texttt{Domino}, \texttt{Domino-9Tile}, 
     \texttt{Equithirds}, \texttt{Fibonacci2D}, \texttt{GoldenTriangle}, 
     \texttt{IntegerLattice}, \texttt{MiniTangram}, \texttt{Octagonal1225}, 
     \texttt{PChairs}, several pentagon tilings, \texttt{Pentomino}, 
     \texttt{Pinwheel}, \texttt{SaddleConnGoldenL}, \texttt{SDHouse}, 
     \texttt{Sphinx}, \texttt{STPinwheel}, \texttt{T2000Triangle}, 
     \texttt{Tetris}, \texttt{Trihex}, \texttt{TriTriangle}, 
     \texttt{TubingenTriangle}, and the \texttt{WaltonChair}. 
}. 
As such, we encourage the reader to visit the site, explore the 
complete listings of results posted on our website, and most importantly to 
explore these new tiling plots by comparing individual distributions between 
related tilings using the form at the bottom of the webpage. 

The new open-source software tools and API we have developed is extendable and 
easily modified, which allows other researchers to explore these and other 
tilings by extending our work over several months with the 
University of Washington. 
Moreover, the gap and pair correlation plots currently generated by the 
stock distribution of our software are not rigid computational ends to the 
project: the numerical plots and tiling point quantities that can be 
computed with the tilings we implement here are limited only by the 
scope and ingenuity of the experimental mathematician exploring these 
tilings with our new tools. 
For example, the computations of the symmetry groups we mention in 
Section \ref{subsubSection_Comp_AC_SConnGL_TT} are 
performed using a \emph{Sage} script utilizing the tilings API we developed for the 
project. We have also employed our tilings API to compute other cases of 
higher-order joint slope gap distributions for special tilings of the plane. 
Given our interest in using the numerical results we compute here to hint at 
theoretical results for these tilings, we also suggest comparisons between 
substitution tilings with similarly-shaped tiles, such as those classes of 
tilings suggested in the comparison form section of our project website 
\cite{PROJECT-WEBSITE}, including categories of chair-like-tiling 
variants, pentagonal tilings, rectangular and domino-shaped tilings, and 
triangular-shaped tiling variants, among others. 

We chose to write our program in Python for clarity and its 
interoperability with the \emph{Sage} and the \emph{SageMathCloud} platforms. 
The implementation we provide in the Python source code for our tilings 
statistics application employs polygon representations of the 
tiles, from which we then extract the distinct individual points 
in the tiling. 
We also employ several existing open-source implementations of these 
tilings written in the \textit{Mathematica} language to supplement our 
development in Python 
\cite{APERIODIC-TILINGS-COMP,AMANN-CHAIR-WDEMO,PENTAGON-WDEMO,AMMANN-TILES-WDEMO}.
The \texttt{IntegerLattice} tilings is an implementation of the integer lattice 
points in the first quadrant satisfying $0 < y < x \leq R$ which we have 
used for testing and calibration of our numerical results. 
In particular, we know the expected empirical distributions for 
the slope gaps of the integer lattice as Hall's distribution. 
Similarly, we have employed the \texttt{SaddleConnGoldenL} tiling variant 
discussed above for testing and calibration of the accuracy of the numerical 
computations of the gap distributions in our software since we know the 
exact limiting distribution for the slope gaps of this tiling 
\cite{GAP-DIST-SLOPES-GOLDENL}. 

\section{Chair substitution tilings} 
\label{Section_Chair-Related_Tilings} 

\subsection{The Ammann (A2) chair tiling} 
\label{Section_AmmannChair} 

\subsubsection{Definition of the Ammann chair tiling} 

The Ammann chair (\texttt{AmmannChair}) tiling examples given in 
Figure \ref{figure_tiling_initial_La3Sa3_ScaledDims} 
show the initial (scaled) tiling dimensions, and the 
corresponding tiling point vertices for the first few substitution 
steps of this chair tiling procedure 
\cite{TILINGS-AND-PATTERNS,2D-GEOM-GOLDEN-SECTION,TILINGS-ENCYCLOPEDIA}. 
The recursive procedure used to generate these tilings leads to 
slightly more formalized definitions of the 
results shown in the histogram plots computed in the listings 
of the computational data compiled below in Section 
\ref{subSection_AmmannChair_StackedPlot_Figures}. 
In particular, for the $2 \times 2$ real matrices, 
$M_1, M_2 \in \left\{0, \pm \varphi^{-1/2}, \pm \varphi^{-1}\right\}_{2x2}$, 
and the column vectors, $T_1$ and $T_2$, defined by 
\begin{align} 
\label{eqn_M2T2_M1T1_defs} 
M_2 := \varphi^{-1} \times \begin{bmatrix} 1 & 0 \\ 0 & -1 \end{bmatrix}, 
T_2 := \varphi^{3} \times \TilingPoint{0}{1}, 
M_1 := \varphi^{-1/2} \times \begin{bmatrix} 0 & -1 \\ 1 & 0 \end{bmatrix}, 
T_1 := \varphi^{5/2} \times \TilingPoint{1}{0}, 
\end{align} 
these tiling point sets are generated by a 
substitution and tile inflation / deflation procedure 
defined recursively according to the next set definitions 
given in \eqref{eqn_Ln_init_rec_def}. 
\begin{align} 
\label{eqn_Ln_init_rec_def} 
L_{N} & = 
  \begin{cases} 
     \left\lbrace 
     M_1 \TilingPoint{x}{y} + T_1 : \TilingPoint{x}{y} \in L_{N-1} 
     \right\rbrace 
     \bigcup 
     \left\lbrace 
     M_2 \TilingPoint{x}{y} + T_2 : \TilingPoint{x}{y} \in L_{N-2} 
     \right\rbrace, & \text{ if $N \geq 2$; } \\ 
     \left\lbrace 
     \TilingPoint{0}{0}, 
     \TilingPoint{\varphi^{5/2}}{0}, 
     \TilingPoint{\varphi^{5/2}}{\varphi^{2}}, 
     \TilingPoint{\varphi^{3/2}}{\varphi^{2}}, 
     \TilingPoint{\varphi^{3/2}}{\varphi^{3}}, 
     \TilingPoint{0}{\varphi^{3}} 
     \right\rbrace \bigcup \left\lbrace 
     \TilingPoint{0}{\varphi}, 
     \TilingPoint{\varphi^{1/2}}{\varphi}, 
          \TilingPoint{\varphi^{1/2}}{\varphi^{2}} 
     \right\rbrace, & \text{ if $N = 1$; } \\ 
     \left\lbrace 
     \TilingPoint{0}{0}, 
     \TilingPoint{\varphi^{5/2}}{0}, 
     \TilingPoint{\varphi^{5/2}}{\varphi^{2}}, 
          \TilingPoint{\varphi^{3/2}}{\varphi^{2}}, 
     \TilingPoint{\varphi^{3/2}}{\varphi^{3}}, 
     \TilingPoint{0}{\varphi^{3}} 
     \right\rbrace, & \text{ if $N = 0$. } 
  \end{cases}
\end{align} 
We illustrate our numerical results in the next subsections which 
consider a recursive procedure for generating the plots of the 
pair correlation distributions of the Ammann chair, and also 
observations on the relations between the empirical 
distributions for the Ammann chair tilings and the corresponding 
statistics for the saddle connections on the golden L 
(\texttt{SaddleConnGoldenL}) and for the 
Tubingen triangle (\texttt{TubingenTriangle}) tiling. 

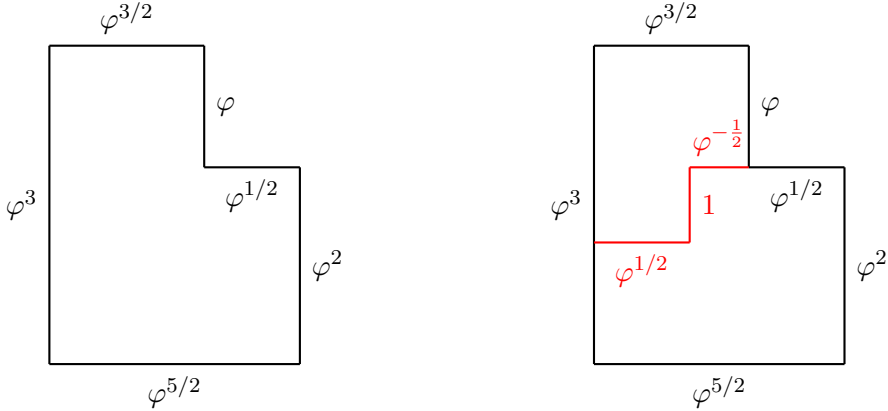
\begin{figure}[ht] 

\bigskip\hrule\hrule\bigskip 

\centering 
\begin{minipage}[b]{0.4\linewidth}
\begin{tikzpicture} 
\draw[line] (0,0) -- (3.33019, 0) node[midway, below = 0.5] {\(\varphi^{5/2}\)};
\draw[line] (3.33019, 0) -- (3.33019, 2.61803) node[midway,right = 0.5] {\(\varphi^{2}\)};
\draw[line] (3.33019, 2.61803) -- (2.05817, 2.61803) node[midway,below = 0.5] {\(\varphi^{1/2}\)};
\draw[line] (2.05817, 2.61803) -- (2.05817, 4.23606) node[midway,right = 0.5] {\(\varphi^{}\)};
\draw[line] (2.05817, 4.23606) -- (0, 4.23606) node[midway,above = 0.5] {\(\varphi^{3/2}\)};
\draw[line] (0, 4.23606) -- (0, 0) node[midway,left = 0.5] {\(\varphi^{3}\)};
\end{tikzpicture}
\end{minipage}
\begin{minipage}[b]{0.4\linewidth}
\begin{tikzpicture} 
\draw[line] (0,0) -- (3.33019, 0) node[midway, below = 0.5] {\(\varphi^{5/2}\)};
\draw[line] (3.33019, 0) -- (3.33019, 2.61803) node[midway,right = 0.5] {\(\varphi^{2}\)};
\draw[line] (3.33019, 2.61803) -- (2.05817, 2.61803) node[midway,below = 0.5] {\(\varphi^{1/2}\)};
\draw[line] (2.05817, 2.61803) -- (2.05817, 4.23606) node[midway,right = 0.5] {\(\varphi^{}\)};
\draw[line] (2.05817, 4.23606) -- (0, 4.23606) node[midway,above = 0.5] {\(\varphi^{3/2}\)};
\draw[line] (0, 4.23606) -- (0, 0) node[midway,left = 0.5] {\(\varphi^{3}\)};
\draw[linered] (0, 1.61803) -- (1.27202, 1.61803) node[midway,below = 0.5] {\(\varphi^{1/2}\)};
\draw[linered] (1.27202, 1.61803) -- (1.27202, 2.61803) node[midway,right = 0.5] {\(1\)};
\draw[linered] (1.27202, 2.61803) -- (2.05817, 2.61803) node[midway,above = 0.5] {\(\varphi^{-\frac{1}{2}}\)};
\end{tikzpicture}
\end{minipage}

\caption{Initial Tiling Dimensions and Deflation Rule for the Ammann Chair Tiling} 
\label{figure_tiling_initial_La3Sa3_ScaledDims}

\bigskip\hrule\hrule\bigskip 

\end{figure} 

\subsubsection{A special case theoretical analysis of the 
               pair correlation distributions 
               of the Ammann chair tiling} 
\label{subSection_AmmannChair_PC_recProc} 

First, we observe that the recursion implicit to 
\eqref{eqn_Ln_init_rec_def} provides a two--dimensional matrix procedure to 
perform the tiling substitution and inflation noted in the previous figures. 
We next iteratively apply this recursion to the pair correlation 
distance sets to see that we can approximate the full pair correlation 
plots, $\widetilde{D}_n$, by $\widetilde{D}_{n-1}$ and $\widetilde{D}_{n-2}$, 
as in Figure \ref{figure_tiling_initial_La3Sa3_ScaledDims}, and 
moreover, by the component--wise transformations of the 
distance sets corresponding to the next definitions of 
$\widetilde{D}_{i,m,n-k}$ for $i := 2,3,4,5$, $m := 0$, and 
$k \in [2, 5]$ in \eqref{eqn_D2345mn_defs}. 
In particular, the next sets correspond to the definitions of these 
transformation cases where the constant offsets, 
$\widetilde{\gamma}_{i,m,x}$ and $\widetilde{\gamma}_{i,m,y}$, 
correspond to predictable linear combinations of small powers of 
$\varphi := (1+\sqrt{5}) / 2$ and $\gamma := \sqrt{\varphi}$, and where the 
tiling lattice points, $(x_i, y_i) \in L_n$, 
are expressed as integer linear combinations of $\varphi$ depending on the 
recursion and the Fibonacci and Lucas numbers. 
\begin{align} 
\label{eqn_D2345mn_defs} 
\widetilde{D}_{2,m,n-j} & := \left\lbrace 
     \left(x_1+x_2 - \widetilde{\gamma}_{2,m,x}\right)^2 + 
     \left(y_1-y_2 - \widetilde{\gamma}_{2,m,y}\right)^2 : 
     \TilingPoint{x_1}{y_1}, \TilingPoint{x_2}{y_2} \in 
     \varphi^{-(4m+2)/2} \times L_{n-j} 
     \right\rbrace \\ 
\notag 
\widetilde{D}_{3,m,n-j} & := \left\lbrace 
     \left(x_1+x_2 - \widetilde{\gamma}_{3,m,x}\right)^2 + 
     \left(y_1+y_2 - \widetilde{\gamma}_{3,m,y}\right)^2 : 
     \TilingPoint{x_1}{y_1}, \TilingPoint{x_2}{y_2} \in 
     \varphi^{-(4m+3)/2} \times L_{n-j} 
     \right\rbrace \\ 
\notag 
\widetilde{D}_{4,m,n-j} & := \left\lbrace 
     \left(x_1-x_2 - \widetilde{\gamma}_{4,m,x}\right)^2 + 
     \left(y_1+y_2 - \widetilde{\gamma}_{4,m,y}\right)^2 : 
     \TilingPoint{x_1}{y_1}, \TilingPoint{x_2}{y_2} \in 
     \varphi^{-(4m+4)/2} \times L_{n-j} 
     \right\rbrace \\ 
\notag 
\widetilde{D}_{5,m,n-j} & := \left\lbrace 
     \left(x_1-x_2 - \widetilde{\gamma}_{5,m,x}\right)^2 + 
     \left(y_1-y_2 - \widetilde{\gamma}_{5,m,y}\right)^2 : 
     \TilingPoint{x_1}{y_1}, \TilingPoint{x_2}{y_2} \in 
     \varphi^{-(4m+5)/2} \times L_{n-j} 
     \right\rbrace 
\end{align} 
The constants implicit to the previous definitions in 
\eqref{eqn_D2345mn_defs} correspond to the next special cases in 
\eqref{eqn_SpecialCase_GammaXY_Constants}, 
though more general formulas for $\gamma_{j,m,xy}$ may be 
expanded in terms of powers of $\gamma = \sqrt{\varphi}$ and the 
Fibonacci numbers. 
\begin{align} 
\label{eqn_SpecialCase_GammaXY_Constants} 
\widetilde{\gamma}_{2,0,x}, \widetilde{\gamma}_{2,0,y} & := 
     \varphi^{-1} \times 2 \varphi^{3}, \varphi^{-1} \times 2 \varphi^{2} \\ 
\notag 
\widetilde{\gamma}_{3,0,x}, \widetilde{\gamma}_{2,0,y} & := 
     \varphi^{-3/2} \times 2 \varphi^{4}, 
     \varphi^{-3/2} \times 2 \varphi^{3} \\ 
\notag 
\widetilde{\gamma}_{4,0,x}, \widetilde{\gamma}_{2,0,y} & := 0, 0 \\ 
\notag 
\widetilde{\gamma}_{5,0,x}, \widetilde{\gamma}_{2,0,y} & := 
     \varphi^{-2} \times -2 \varphi^{3}, 
     \varphi^{-2} \times -2 \varphi^{2} 
\end{align} 
The computed plots shown in 
Figure \ref{figure_AmmannChair_StackedPCPlots} 
(page \pageref{figure_AmmannChair_StackedPCPlots}) 
then approximate the non-normalized, stacked histogram plots for the 
following sets defined in the notation of 
\eqref{eqn_D2345mn_defs} above as follows 
when ``$+$'' denotes set union: 
\begin{align} 
\label{eqn_AmannChairTiling_LHS_RHS_sets_approx_v1} 
\undersetbrace{\LHS(n)}{\widetilde{D}_n} & = 
     \undersetbrace{\RHS(n-1)}{(\varphi^{-1/2})^{2} \times \widetilde{D}_{n-1}} + 
     \undersetbrace{\RHS(n-2)}{(\varphi^{-1})^{2} \times \widetilde{D}_{n-2}} \\ 
\notag 
     & \phantom{=\quad} + 
     \undersetbrace{\RHS_2(n-2)}{\widetilde{D}_{2,0,n-2}} + 
     \undersetbrace{\RHS_3(n-3)}{\widetilde{D}_{3,0,n-3}} + 
     \undersetbrace{\RHS_4(n-4)}{\widetilde{D}_{4,0,n-4}} + 
     \undersetbrace{\RHS_5(n-5)}{\widetilde{D}_{5,0,n-5}} \\ 
\notag      
& \phantom{=\quad} + 
     \undersetbrace{\RHS(n-4)}{(\varphi^{-2})^{2} \times \widetilde{D}_{n-4}} 
     \pm \ErrorTerms_{n-7}. 
\end{align} 
Equivalently, we can define these stacked components of the pair correlation 
histogram plots show on the last pages in terms of transformations 
involving successive transformations of the $2 \times 2$ matrices defined in 
\eqref{eqn_M2T2_M1T1_defs}. 
Let the affine transformations, $L$ and $S$, of sets of tiling points in the 
plane be defined as 
\begin{align} 
\notag 
L \TilingPoint{x}{y} & := M_1 \TilingPoint{x}{y} + T_1 \\ 
\notag 
S \TilingPoint{x}{y} & := M_2 \TilingPoint{x}{y} + T_2, 
\end{align} 
so that for $N \geq 2$, the lattice point sets, $L_N$, 
in \eqref{eqn_Ln_init_rec_def} are alternately given by 
\begin{align} 
\notag 
L_n & = \left\{ 
     S \TilingPoint{x}{y} : \TilingPoint{x}{y} \in L_{N-2} 
     \right\} \bigcup \left\{      
     L \TilingPoint{x}{y} : \TilingPoint{x}{y} \in L_{N-1} 
     \right\}, 
\end{align} 
according to the substitution methods given in 
\eqref{eqn_Ln_init_rec_def} and shown in 
Figure \ref{figure_tiling_initial_La3Sa3_ScaledDims}. 
We may then write the previous approximation to the squared pair correlation distance sets 
from \eqref{eqn_AmannChairTiling_LHS_RHS_sets_approx_v1} as 
\begin{align} 
\label{eqn_AmannChairTiling_LHS_RHS_sets_approx_v2}
\widetilde{D}_n & \approx 
     \varphi^{-1} \widetilde{D}_{n-1} + \varphi^{-2} \widetilde{D}_{n-2} \\ 
\notag 
     & \phantom{\approx} + 
     \widetilde{D}\left(S_{n-2}, LL_{n-2}\right) + 
     \widetilde{D}\left(LS_{n-3}, SL_{n-3}\right) + 
     \widetilde{D}\left(SS_{n-4}, LSL_{n-4}\right) + 
     \widetilde{D}\left(LSS_{n-5}, SSL_{n-5}\right) \\ 
\notag 
     & \phantom{\approx} + 
     \widetilde{D}\left(SSS_{n-6}, SSL_{n-5}\right) + 
     \ErrorTerms_{2, n-7}, 
\end{align} 
where $\widetilde{D}(S_1, S_2)$ denotes the distance set between points in the 
two sets, $S_1$ and $S_2$, and where $M_N$ denotes the set of 
points, $M [x,y]^{T}$, for $[x,y]^{T} \in L_N$ and 
some $2 \times 2$ transformation matrix $M$, and where the operation, $SS$, 
corresponds to a single scaling and translation of points in the plane. 
The last observation suggests that we can iteratively apply this approximation 
to obtain further approximations to the distance sets, 
$\widetilde{D}\left(SSS_{n-6}, SSL_{n-5}\right)$. 

We expect that the limiting behavior for large $N$ of these pair correlation plots of the 
Euclidean distances between the tiling lattice points is approximately the 
stacked sum of skew normal pdfs for 
appropriately scaled bin sizes depending on $\varphi$ which 
tend to $0$ as $N \rightarrow \infty$. 
Additionally, expanding out the exact sums of Euclidean distances suggests 
convolutions with the exact lattice point distributions 
of the chair tiling. 
Leaving out the contributions of the intersection points in the 
approximations generated through this recursive procedure leads to 
error terms that are more pronounced towards the larger--distance 
tail of the exact distribution. 
We can perform a more careful analysis of these error terms 
corresponding to the intersection points between successive tiling 
steps. 

\subsubsection{A comparison of gap distributions and pair correlation 
               statistics between the Ammann chair tiling, 
               saddle connections on the Golden-L, and the 
               Tubingen triangle tiling} 
\label{subsubSection_Comp_AC_SConnGL_TT} 

The Golden-L surface considered by the special distributions in 
\cite{GAP-DIST-SLOPES-GOLDENL} 
corresponds exactly to the dimensions of the initial tile of the 
Ammann chair tiling shown in 
Figure \ref{figure_tiling_initial_La3Sa3_ScaledDims}. 
Similarly, the initial tile for the \texttt{TubingenTriangle} tiling 
corresponds to performing substitutions in the famous 
\emph{T\"ubingen triangle}, or scaled copies of the 
\emph{Robinson triangles} which bisect the \emph{golden triangle} in a 
natural way \cite{TUBINGEN-GAPS}. 
We have some intuition to believe that the empirical distributions we 
compute numerically here for the 
\texttt{AmmannChair}, \texttt{SaddleConnGoldenL}, and \texttt{TubingenTriangle} 
tilings supported by our software are related. 
In particular, 
we have an intuitive relationship between these three sets of tiling points 
which relates the symmetry groups of each respective tiling to their 
corresponding slope gap distributions. Namely, 
we know that the set of saddle connections on the golden L consists of 
two orbits (a so-termed ``\emph{long}`` and another ``\emph{short}`` 
subset orbit) of the 
Hecke $(2, 5, \infty)$ triangle group, which is generated by the two matrices 
$g_1$ and $g_2$ defined as follows: 
\begin{equation*} 
g_1 = \begin{bmatrix} 1 & \varphi \\ 0 & 1 \end{bmatrix} 
     \qquad \text{ and } \qquad 
g_2 = \begin{bmatrix} 1 & 0 \\ \varphi & 1 \end{bmatrix}. 
\end{equation*} 

\noindent
\textbf{The Ammann Chair Case}: 
In this case, it suffices to consider only the action of the two generator 
matrices, $g_1$ and $g_2$, on the \texttt{AmmannChair} tiling points. 
If we let $L_n = \AmmannChair(n)$ denote the 
set of tiling points in the \texttt{AmmannChair} 
tiling after $n$ substitution steps, we have 
evaluated the symmetry groups of these tiling points computationally to 
find that for each $i = 1, 2$ and for all $v \in L_n$ when $n \geq 1$, 
we can find a $m \geq 1$, a vector $w \in L_m$, and a (non-unique) 
real-valued scalar $c := a \cdot \varphi^{p/2}$ for some integers 
$a \geq 1$ and $p \geq 0$ such that $g_i v = c \cdot w$. \\ 

\noindent
\textbf{The T\"ubingen Triangle Case}: 
Similarly, in this case, we can consider only the action of the two generator 
matrices, $g_1$ and $g_2$, on the \texttt{Tubingen} tiling points. 
If we let $L_n = \TubingenTriangle(n)$ denote the 
set of tiling points in the \texttt{TubingenTriangle} 
tiling after $n$ substitution steps, we have also 
evaluated the symmetry groups of these tiling points computationally to 
find that for each $i = 1, 2$ and for all $v \in L_n$ when $n \geq 1$, 
we can find a $m \geq 1$, a vector $w \in L_m$, and a 
real-valued scalar $c$ such that $g_i v = c \cdot w$. 
We can say more about the action of each particular generator 
on the \texttt{TubingenTriangle} tilings points: 
for each $i = 1, 2$ and all vectors $v \in \TubingenTriangle(n)$, 
we can find real scalars, $c_i$, and corresponding coordinate stretching 
matrices, $M_{1,v} = \begin{bmatrix} 1 & 0 \\ 0 & c_1\end{bmatrix}$ and 
$M_{1,v} = \begin{bmatrix} c_2 & 0 \\ 0 & 1\end{bmatrix}$, such that 
$g_i v = M_{i,v} v$. \\ 

\noindent 
\textbf{Interpretations}: 
We provide a side-by-side comparison of the empirical 
slope gap distributions in 
\figureref{figure_EmpSlopeGapDist_SConnGoldenL_TT_tilings} 
and 
\figureref{figure_AmmannChair_EmpiricalSlopeGapDist}. 
There are strong characteristic 
similarities between the slope gap distribution for the saddle connections 
on the golden L in \cite{GAP-DIST-SLOPES-GOLDENL} and the 
slope gap distribution for the \texttt{TubingenTriangle} tiling which are 
noticed by inspection of the empirical gap distributions given in the 
first reference and in \cite{TUBINGEN-GAPS}. 
Conversely, this together with the proof given in the references 
\cite{Athreya,GAP-DIST-SLOPES-GOLDENL}
then also immediately suggests that the symmetry groups for the 
\texttt{SaddleConnGoldenL} and \texttt{TubingenTriangle} 
(and \texttt{AmmannChair}) tilings are related. 

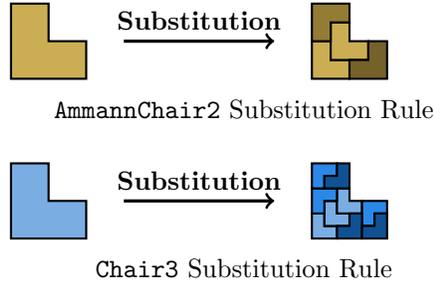
\begin{figure}[ht] 

\bigskip\hrule\hrule\bigskip 

\centering 
\begin{minipage}[b]{0.35\linewidth}
\begin{tikzpicture} 
\begin{scope}
\filldraw[thick,fill=tan,fill opacity=1] 
     (0, 0) -- (1, 0) -- (1, 0.5) -- (0.5, 0.5) -- 
     (0.5, 1) -- (0, 1) -- cycle;
\end{scope} 
\draw[very thick, ->] (1.5,0.5) -- node[above, text width=4cm, align=center]
    {\small\bfseries Substitution} (3.5,0.5);
\begin{scope}[shift={(4,0)}]
\filldraw[thick,fill=tan,fill opacity=1] 
     (0, 0) -- (0.5, 0) -- (0.5, 0.25) -- (0.25, 0.25) -- 
     (0.25, 0.5) -- (0, 0.5) -- cycle;
\filldraw[thick,fill=tan,fill opacity=1] 
     (0.25, 0.25) -- (0.75, 0.25) -- (0.75, 0.5) -- (0.5, 0.5) -- 
     (0.5, 0.75) -- (0.25, 0.75) -- cycle;
\filldraw[thick,fill=darktan,fill opacity=1] 
     (0, 0.5) -- (0.25, 0.5) -- (0.25, 0.75) -- (0.5, 0.75) -- 
     (0.5, 1) -- (0, 1) -- cycle;
\filldraw[thick,fill=tanbrown,fill opacity=1] 
     (0.5, 0) -- (1, 0) -- (1, 0.5) -- (0.75, 0.5) -- 
     (0.75, 0.25) -- (0.5, 0.25) -- cycle;
\end{scope} 
\end{tikzpicture}
\subcaption*{\texttt{AmmannChair2} Substitution Rule} 
\end{minipage}

\bigskip

\begin{minipage}[b]{0.35\linewidth}
\begin{tikzpicture} 
\begin{scope}
\filldraw[thick,fill=chairlightblue,fill opacity=1] 
     (0, 0) -- (1, 0) -- (1, 0.5) -- (0.5, 0.5) -- 
     (0.5, 1) -- (0, 1) -- cycle;
\end{scope} 
\draw[very thick, ->] (1.5,0.5) -- node[above, text width=4cm, align=center]
    {\small\bfseries Substitution} (3.5,0.5);
\begin{scope}[shift={(4,0)}]
\filldraw[thick,fill=chairlightblue,fill opacity=1] 
     (0, 0) -- (0.3333, 0) -- (0.3333, 0.1667) -- (0.1667, 0.1667) -- 
     (0.1667, 0.333) -- (0, 0.333) -- cycle;
\filldraw[thick,fill=chairlightblue,fill opacity=1] 
     (0.1667, 0.1667) -- (0.5, 0.1667) -- (0.5, 0.3333) -- (0.3333, 0.3333) -- 
     (0.3333, 0.5) -- (0.1667, 0.5) -- cycle;
\filldraw[thick,fill=chairlightblue,fill opacity=1] 
     (0.3333, 0.3333) -- (0.6667, 0.3333) -- (0.6667, 0.5) -- 
     (0.5, 0.5) -- (0.5, 0.6667) -- (0.3333, 0.6667) -- cycle;

\filldraw[thick,fill=chairblue,fill opacity=1] 
     (0, 0.3333) -- (0.1667, 0.3333) -- (0.1667, 0.5) -- 
     (0.3333, 0.5) -- (0.3333, 0.6667) -- (0, 0.6667) -- cycle;
\filldraw[thick,fill=chairblue,fill opacity=1] 
     (0, 0.6667) -- (0.1667, 0.6667) -- (0.1667, 0.8333) -- 
     (0.3333, 0.8333) -- (0.3333, 1) -- (0, 1) -- cycle;
\filldraw[thick,fill=chairblue,fill opacity=1] 
     (0.6667, 0.1667) -- (0.8333, 0.1667) -- (0.8333, 0.3333) -- 
     (1, 0.3333) -- (1, 0.5) -- (0.6667, 0.5) -- cycle;

\filldraw[thick,fill=chairdarkblue,fill opacity=1] 
     (0.1667, 0.6667) -- (0.5, 0.6667) -- (0.5, 1) -- 
     (0.3333, 1) -- (0.3333, 0.8333) -- (0.1667, 0.8333) -- cycle;
\filldraw[thick,fill=chairdarkblue,fill opacity=1] 
     (0.3333, 0) -- (0.6667, 0) -- (0.6667, 0.3333) -- 
     (0.5, 0.3333) -- (0.5, 0.1667) -- (0.3333, 0.1667) -- cycle;
\filldraw[thick,fill=chairdarkblue,fill opacity=1] 
     (0.6667, 0) -- (1, 0) -- (1, 0.3333) -- 
     (0.8333, 0.3333) -- (0.8333, 0.1667) -- (0.6667, 0.1667) -- cycle;
\end{scope} 
\end{tikzpicture}
\subcaption*{\texttt{Chair3} Substitution Rule} 
\end{minipage}

\captionof{figure}{Substitution Rules for the 4-Tile \texttt{AmmannChair2} and 
                   9-Tile \texttt{Chair3} Tiling Variations} 
\label{figure_49-Tile_ChairTiling_SubstRules}

\bigskip\hrule\hrule\bigskip 

\end{figure} 

\subsection{Statistical plots and gap distributions for L-shaped chair tilings} 
\label{subSection_AmmannChair_StackedPlot_Figures}

We next compare the empirical distributions of the non-periodic 
4-tile and 9-tile chair tilings in 
Figure \ref{figure_49-Tile_ChairTiling_SubstRules}. 
These examples are of interest primarily 
due to the group actions of $\mathbb{Z}^2$, $\mathbb{R}^2$, and a few 
other groups on their associated tiling spaces 
\cite{TBLCHAIR-GRPACTIONS,TOP-TILINGSPCS}. 
In particular, we can form three distinct 
tiling spaces out of each of these examples by letting $\Omega_1$ equal the 
set of all chair tilings with edges parallel to the coordinate axes, 
$\Omega_{\rot}$ equal the set of all chair tilings in any orientation 
with respect to the coordinate axes, and $\Omega_0$ denote the set of all 
chair tilings modulo rotation about the origin. 
The space $\Omega_1$ is then the closure of the \emph{translational} orbit 
of any one chair tiling, $\Omega_{\rot}$ is the closure of the Euclidean 
orbit of any chair tiling, and we have that 
$\Omega_0 = \Omega_1 / \mathbb{Z}_4 = \Omega_{\rot} / S^1$ (4-tile case) and 
$\Omega_0 = \Omega_1 / \mathbb{Z}_9 = \Omega_{\rot} / S^1$ (9-tile case) 
where the $\mathbb{Z}_m$ are finite rotation groups acting on $\Omega_1$ and 
$S^1 = \SO(2)$ is an infinite rotation group acting on $\Omega_{\rot}$ 
\cite[\S 4.5]{TOP-TILINGSPCS}. 

The stacked subplots shown in 
\figureref{figure_AmmannChair_StackedPCPlots} 
for the pair correlation plots corresponding to the 
recursively-defined functions in 
Section \ref{subSection_AmmannChair_PC_recProc} 
provide information at the relatively large substitution step sizes of 
$N := 13, 14$. 
Without the additional structure we develop in the previous section, 
we are only able to compare the Ammann Chair tilings at comparatively 
small step sizes of $N \leq 10$. 
The statistical plots corresponding to pair correlation, the 
gap distribution of the slopes, and the gap distribution of the angles 
we generate numerically also appear convergent to 
smooth curves for their respective limiting distributions when 
scaled by the number of points, $\binom{L_n}{2}$. 
We find, as predicted, that the distributions of the angles in the 
Ammann Chair tilings roughly equidistribute for large $N$, whereas the 
slope distributions for this tiling form a non-constant, approximately 
decreasing distribution concentrated near the origin. 

For comparison to the 
2-tile 
\texttt{AmmannChair} tiling which is partitioned by scaled copies of the 
Golden L at each substitution step, we also consider relations between the 
respective cases of the 4-tile \texttt{AmmannChair2} and the 
9-tile \texttt{AmmannChair3} 
chair tiling variants whose substitution rules are shown in 
Figure \ref{figure_49-Tile_ChairTiling_SubstRules}. 
The numerical approximations to the empirical distributions for each of these 
respective variations of the chair tilings when $N := 6$ and $N := 4$ 
is shown in \figureref{figure_ChairTilineVariants_DistsSummary}. 
Other chair-like tiling variants which we consider in our complete set 
of online results include the 
\texttt{AmmannA3}, \texttt{AmmannA4}, \texttt{Armchair}, 
\texttt{PChairs} (or the \emph{pregnant chairs} tiling), 
\texttt{Pentomino}, and the \texttt{WaltonChair} tilings. 
A comparison of the empirical distributions of these other notable 
chair tilings cases is given through the comparison form 
views available on our tilings project website \cite{PROJECT-WEBSITE}. 

\begin{figure}[h]

\bigskip\hrule\hrule\bigskip 

\begin{minipage}[b]{\linewidth}

\begin{minipage}{0.18\linewidth}
\centering\includegraphics[frame,scale=0.5]{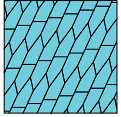} 
\caption*{\underline{\texttt{Pentagon1}}}
\end{minipage} 
\begin{minipage}{0.18\linewidth}
\centering\includegraphics[frame,scale=0.5]{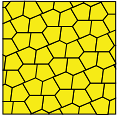} 
\caption*{\texttt{Pentagon2}}
\end{minipage} 
\begin{minipage}{0.18\linewidth}
\centering\includegraphics[frame,scale=0.5]{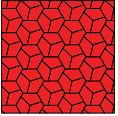} 
\caption*{\underline{\texttt{Pentagon3}}}
\end{minipage} 
\begin{minipage}{0.18\linewidth}
\centering\includegraphics[frame,scale=0.5]{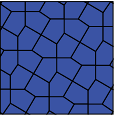} 
\caption*{\underline{\texttt{Pentagon4}}}
\end{minipage} 
\begin{minipage}{0.18\linewidth}
\centering\includegraphics[frame,scale=0.5]{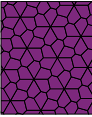} 
\caption*{\texttt{Pentagon5}}
\end{minipage} 

\end{minipage} 

\bigskip

\begin{minipage}[b]{\linewidth}

\begin{minipage}{0.23\linewidth}
\centering\includegraphics[frame,scale=0.5]{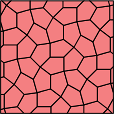} 
\caption*{\texttt{Pentagon8}}
\end{minipage} 
\begin{minipage}{0.23\linewidth}
\centering\includegraphics[frame,scale=0.5]{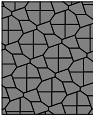} 
\caption*{\underline{\texttt{Pentagon10}}}
\end{minipage} 
\begin{minipage}{0.23\linewidth}
\centering\includegraphics[frame,scale=0.5]{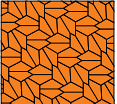} 
\caption*{\underline{\texttt{Pentagon11}}}
\end{minipage} 
\begin{minipage}{0.23\linewidth}
\centering\includegraphics[frame,scale=0.5]{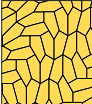} 
\caption*{\underline{\texttt{Pentagon15}}}
\end{minipage} 

\end{minipage} 

\bigskip\hrule\bigskip

\caption{\textbf{The Pentagonal Tiling Variations Supported by Our Software} \\ 
        {\small 
             The images of these tilings are taken from the interactive demo 
             provided by Ed Pegg \cite{PENTAGON-WDEMO}. 
             The underlined pentagonal tiling variants shown above for 
             special default settings of their implicit parameters are 
             featured in the empirical pair correlation 
             distributions shown in 
             \figureref{figure_PCDists_PentagonTilings_Comp}. 
         }
         }
\label{figure_pentagon_tiling_variants}

\bigskip\hrule\hrule\bigskip 

\end{figure} 

\section{Pentagonal tilings of the plane} 
\label{Section_pentagonal_tilings}

The constructions of the fifteen known distinct types of 
pentagonal tilings of the plane are one of the most famous problems in 
mathematics that has occupied mathematicians and housewives alike for 
most of the $20^{th}$ and $21^{st}$ centuries. 
One of Hilbert's twenty-three open problems in mathematics published in $1900$ 
considers finding equations for a full set of 
distinct classes of pentagonal tilings that cover the plane. 
Hilbert's $18^{th}$ problem is partially resolved by the fourteen 
types of pentagon tilings discovered by K.\ Reinhardt (1918), 
R.\ B.\ Kershner (1968), R.\ James (1975), M.\ Rice, and 
R.\ Stein (1985) throughout the twentieth century. 
The fifteenth known class of these famous pentagonal tilings 
was recently discovered by computer assisted researchers in $2015$. 
An exhaustive interactive demonstration of all fifteen of the pentagon tilings 
provided by the reference \cite{PENTAGON-WDEMO} 
shows the other variations including the 
nine tiling variants illustrated in 
Figure \ref{figure_pentagon_tiling_variants}. 

We have implemented a subset of these tilings in our software to 
perform statistics on the tile vertices of patches of the 
first, second, third, fourth, fifth, eighth, tenth, eleventh, and the 
most recent fifteenth pentagonal tilings in the forms of the 
default parameters to these tilings provided by the interactive 
demonstration mentioned above in 
Figure \ref{figure_pentagon_tiling_variants}. 
Notice that these pentagonal tiling variants 
comprise the examples of non-substitution tilings provided by 
default in our software. 
We also provide a \emph{Sage} worksheet for manipulating the statistical 
gap distribution plots of the newest fifteenth pentagonal tiling on our 
project website \cite{PROJECT-WEBSITE}. 

We analyze the plots of the 
pair correlation between these vertex sets in the comparisons 
given in the figures below. 
Side-by-side comparisons summarizing the empirical 
pair correlation between points in these tilings computed using our 
software is given in 
\figureref{figure_PCDists_PentagonTilings_Comp}. 
The open-source software we provide on our website 
is easily modified to support other variations of these nine 
parameterized classes of pentagon tilings. 
To our knowledge, the results in the figures below provide the first 
computations of the gap and pair correlation distributions for these 
pentagonal tilings, and in particular, for the newest fifteenth pentagonal 
tiling recently discovered within the last two years. 

\section{Ulam sets in two dimensions} 
\label{Section_UlamSets}

\subsection{Definitions of Ulam sets and their geometric properties} 
\label{subSection_UlamSets_defs_and_GeomProps}

We first recall the two-dimensional Ulam set construction introduced in \S\ref{subsubSection_UlamSet_Constructions}. 
The next definitions make our intuitive definition of two-dimensional Ulam sets from the introduction and their 
key geometric properties more precise. 

\subsubsection{Ulam sets} 
Given two linearly independent initial vectors $v_0, v_1 \in \mathbb{R}_{\geq 0}^2$, define the \emph{Ulam set} after $N := 0$ 
time steps to be $$U_0 := \{v_0, v_1\}.$$ We then construct subsequent stages of the Ulam set after 
$N \geq 1$ time steps recursively according to the formula 
\[U_N = U_{N-1} \bigcup \left\{v+w : v \neq w \in U_{N-1} \text{ and } ||v+w|| = M_{N-1}\right\},\] 
where 
\[M_N := \min \left\{||w_1+w_2|| : w_1, w_2 \in U_{N}, w_1 \neq w_2, 
     w_1+w_2 \neq w_3+w_4 \text{ for any two pairs } w_1 \neq w_2, w_3 \neq w_4\right\},\] 
and where $||w|| \equiv ||w||_2$ denotes the two-norm, or Euclidean distance of the vector $w$ from the 
origin\footnote{ 
     This is an important distinction to be made since if we use the supremum norm to measure the 
     magnitude of the vectors in our sets we obtain 
     drastically different, and much more structurally irregular, 
     Ulam set points \cite[cf.\ \S 3]{ULAMSEQ-ULAMSETS}. In particular, we are modeling our Ulam set 
     constructions after the examples in two-dimensions found in the reference by 
     Kravitz and Steinerberger which states a two-dimensional lattice theorem 
     governing the structure of our Ulam sets when we have two initial vectors which define these sets. 
}. 
That is, we define the Ulam set $U_N$ after $N$ timing steps to be the Ulam set $U_{N-1}$ 
together with the set of all vectors of minimal norm that can be uniquely written as the 
sum of two vectors in the previous set $U_{N-1}$. We let $U_{\infty} = \bigcup_{N \geq 0} U_N$. \\ 

\subsubsection{The structure of the Ulam set points} 
The infinite set 
$U_{\infty}$ consists of points of the form $av_0+bv_1$ where $(a,b) = (n, 1), (1, n)$ for some $n \in \mathbb{Z}_{\geq 0}$, or 
$(a, b) = (m, n)$ for some \emph{odd} positive integers $m,n \geq 3$ \cite[Thm.\ 1]{ULAMSEQ-ULAMSETS}. 
We also consider the sequence of $n^{th}$ line segments $L_n$ in the wedge spanned by $v_0$ and $v_1$ 
defined by $$L_n = \{xv_0 + yv_1: x, y \geq 0, x+y = n+1\}.$$
Geometrically, these $n^{th}$ line segments correspond to the negatively-sloped lines on the cross grains of the 
wedge defined by the vectors $(1, \varphi), (\varphi, 1)$ in 
Figure \ref{figure_proto_UlamSetPoints_v2}. 
The number of Ulam set points on the $n^{th}$ line segment $L_n$ is given by 
\[
\#_{\Ulam}(n) = 
     \begin{cases} 
     2, & \text{ if $n$ is even; } \\ 
     \frac{(n+1)}{2} & \text{ if $n$ is odd. }
     \end{cases} 
\]
As we will show in proving Theorem \ref{conj_UlamSetTimings_v1} in the next subsections, the 
$L_n$ form natural bins for the Ulam set points which are filled in approximately in 
order, as in $L_n$ before $L_{n+1}$, and so on. 
Thus we may construct so-termed ``\emph{timing distributions}'' based on the intervals of 
time steps at which these $n^{th}$ bounded segments are filled by the vectors in $U_N$. 

\subsubsection{Representations of the vectors on the $n^{th}$ line segments} 
One central point to understanding the proof of Theorem \ref{conj_UlamSetTimings_v1} 
given below is to note the several different, but equivalent representations of the 
vectors on the constructions of the $n^{th}$ geometric line segments which are defined above. 
In particular, when $n$ is \emph{even} the two vectors on $L_n$ are given by $nv_0+v_1$ and $v_0+nv_1$, and 
when $n$ is \emph{odd} we have the following three key representations of the 
vectors on $L_n$ which result by symmetry in the representations involving $v_0,v_1$: 
\begin{align*} 
\tag{I} 
& av_0+bv_1, && a,b \geq 1 \text{ both odd, } a+b = n+1; \\ 
\tag{II} 
& (n+1-b)v_0+bv_1, bv_0+(n+1-b)v_1, && \text{$b = 1, 3, 5, \ldots, n$; } \\ 
\tag{III} 
& \frac{n+1-2d}{2}(v_0+v_1) + 2dv_0, \frac{n+1-2d}{2}(v_0+v_1) + 2dv_1, && 
     \text{$d \in \left[-\frac{n-1}{2}, \frac{n-1}{2}\right]$, } 
     b \mapsto \frac{n+1}{2}-d. 
\end{align*} 

\subsubsection{Prototypical examples of the Ulam sets} 

We first consider the special cases of the Ulam set construction defined in 
the previous subsection corresponding to the 
initial vectors $\{v_0, v_1\} := \{(1, \varphi), (\varphi, 1)\}$ and one case of an 
Ulam set arising from two non-parallel initial vectors chosen randomly from $[0, 1]^2$. 
Figure \ref{figure_proto_UlamSetPoints_v2} and 
Figure \ref{figure_random_UlamSetPoints_v8} provide the respective points in these two special infinite Ulam sets 
truncated after $N := 250$ steps of the recursive procedure used to 
generate successive points in the sets. 
Plots of the empirical slope gap and pair correlation distributions for two Ulam sets defined by the 
initial vector cases of $\{v_0, v_1\} := \{(1, \varphi), (\varphi, 1)\}$ (as above) and the special case where 
$\{v_0, v_1\} := \{(1, 0), (0, 1)\}$ are compared in 
Figure \ref{figure_proto_UlamSetDists_v1} on page \pageref{figure_proto_UlamSetDists_v1}. 

%
%
%

\begin{figure}[ht!] 
\centering
\includegraphics[height=0.4\textheight,fbox]{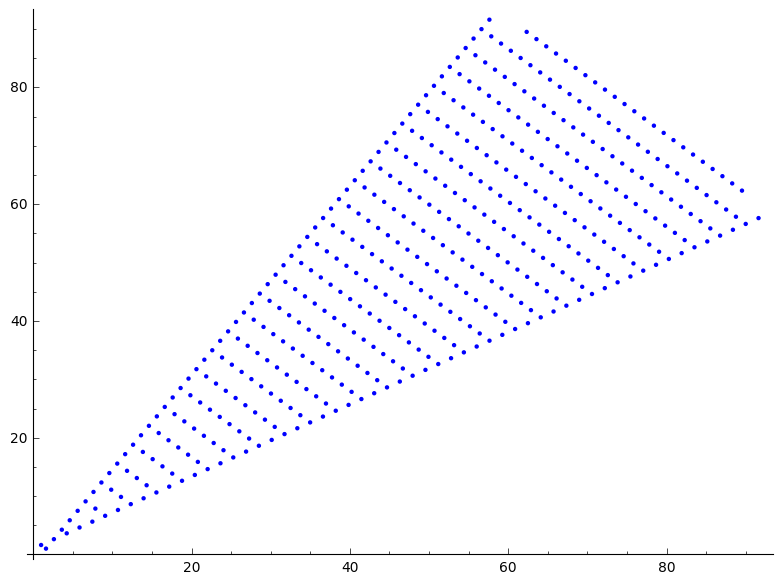} \\ 

\centering
\includegraphics[height=0.4\textheight,fbox]{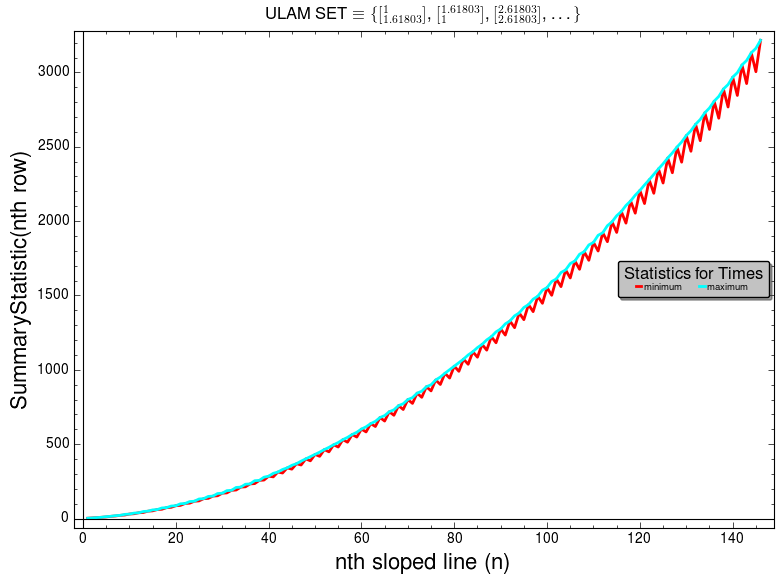}

\caption{The Ulam set arising from $\{(1, \varphi), (\varphi, 1)\}$ generated by running 
         \mbox{\texttt{\$ sage time-plots-random.sage userdef-2d 1 1.618034 1.618034 1 1500}} with our software. 
         The first image shows a plot of the Ulam set
         after $N := 250$ time steps. The second image shows the timing distribution of the 
         maximum and minimum entry times of the Ulam set points on the $n^{th}$ line segments plotted 
         over $n$ where the vectors involved in the timing plot correspond to the Ulam set after 
         $N := 1500$ steps. } 
\label{figure_proto_UlamSetPoints_v2}

\end{figure} 

\begin{figure}[ht!] 
\centering
\includegraphics[height=0.4\textheight,fbox]{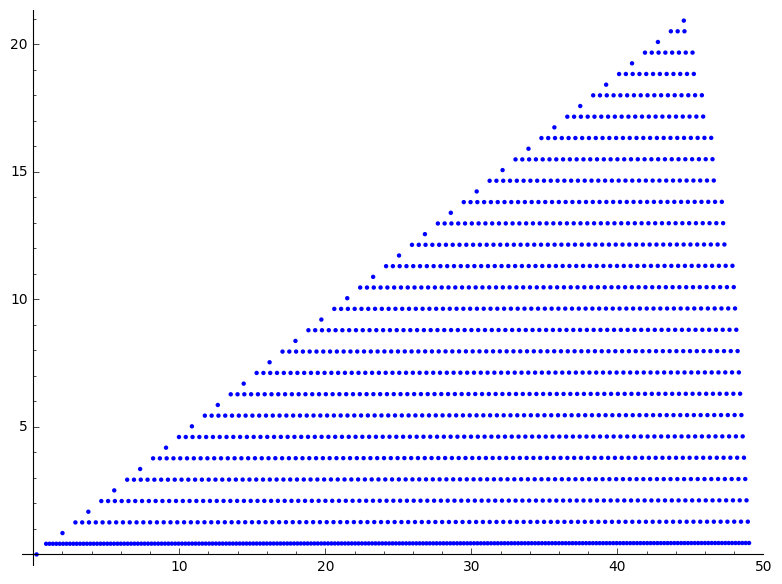} \\ 

\centering
\includegraphics[height=0.4\textheight,fbox]{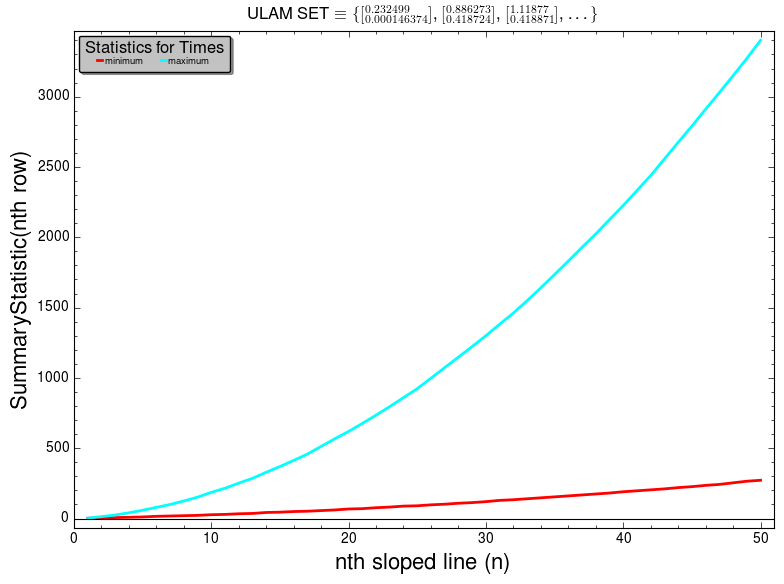}

\caption{An Ulam set with randomly generated initial vectors in $[0,1]^2$ generated by running 
         \mbox{\texttt{\$ sage time-plots-random.sage random-2d 1500}} with our software. 
         The first image shows a plot of the Ulam set
         after $N := 1500$ time steps. The second image shows the timing distribution of the 
         maximum and minimum entry times of the Ulam set points on the $n^{th}$ line segments plotted 
         over $n$ where the vectors involved in the timing plot correspond to the Ulam set after 
         $N := 1500$ steps. } 
\label{figure_random_UlamSetPoints_v8}

\end{figure} 

\subsection{Timing distributions of the generalized Ulam sets} 
\label{subSection_UlamSets_Timing} 

\subsubsection{An overview of timing properties in Ulam sets} 

We know both as a set definition and geometrically from 
the previous subsections what the points in the 
infinite Ulam sets correspond to as the number of steps $N \rightarrow \infty$ for some 
prescribed initial vectors, and moreover we see that these points are filled in 
gradually each over some fixed step $N < \infty$. 
It is then natural to ask questions about the timings of when Ulam set points at 
certain geometrical positions in the infinite set (or lattice points in the wedge defined by 
$v_0$ and $v_1$) formally enter the sets $U_N$. 
To the best of our knowledge such questions and 
conjectures relating to this subject matter have not yet been posed in the literature on 
generalized Ulam sets. 

Plots of a subset of the timing summary statistics we consider in our Ulam sets source code 
for the two prototypical cases from the previous subsection are also provided in the bottom rows of 
Figure \ref{figure_proto_UlamSetPoints_v2} and Figure \ref{figure_random_UlamSetPoints_v8} 
when the number of steps used to generated the Ulam set is $N := 1500$. 
The results and the proof of Theorem \ref{conj_UlamSetTimings_v1} given in the 
next subsection are essentially formulated in order 
to relate the number of steps $N$ to the vectors on each $L_n$, 
which are a priori distinct and unrelated parameters in the Ulam set construction. 

\subsubsection{A theorem on the timing distribution of minimum and maximum means} 

\begin{theorem}[The Timing Distribution of the Maximum and Minimum Means] 
\label{conj_UlamSetTimings_v1} 
If $T_{\max}(n)$ and $T_{\min}(n)$ denote the maximum and minimum entry times of the vectors on the 
$n^{th}$ line segment $L_n$, respectively, we have that 
there exist constants $C_1,C_2,C_3,C_4 > 0$ such that for all sufficiently large $n$ we have that 
\begin{align*} 
C_1 \cdot n^2 \leq T_{\max}(n) \leq C_2 \cdot n^2 \quad\text{ and }\quad 
     C_3 \cdot n^2 \leq T_{\min}(n) \leq C_4 \cdot n^2, 
\end{align*} 
i.e., that there is a large $n_0 \geq 1$ such that the inequalities above hold for all $n \geq n_0$. 
More precisely, if we define the maximum and minimum means of the entry times of the vectors on $L_n$ to be 
\begin{align*} 
T_{\max}(n) & = \max\ \left\{N \geq 1: \text{some vector $w \in L_n$ satisfies } 
                            w \in U_N \text{ and } w \notin U_{N-1}\right\} \\ 
T_{\min}(n) & = \min\ \left\{N \geq 1: \text{some vector $w \in L_n$ satisfies } 
                            w \in U_N \text{ and } w \notin U_{N-1}\right\}, 
\end{align*} 
then we have the tight asymptotic bounds on the growth of these two functions stated above: 
$T_{\max}(n) = \Theta(n^2)$ and $T_{\min}(n) = \Theta(n^2)$. 
\end{theorem}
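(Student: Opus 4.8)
The plan is to reduce the timing question to a counting problem by showing that Ulam points enter the sets $U_N$ in nondecreasing order of Euclidean norm, so that the entry time of a point is essentially the number of distinct norms of Ulam points that do not exceed its own. First I would establish this monotonicity. The crucial geometric fact is that $v_0, v_1$ lie in the closed first quadrant, so $\langle v_0, v_1\rangle \geq 0$ and every point $av_0 + bv_1$ with $a,b \geq 0$ is a nonnegative combination; consequently, if $s = u + w$ with $u, w$ nonzero points of the wedge, then $\|s\|^2 = \|u\|^2 + \|w\|^2 + 2\langle u,w\rangle > \max(\|u\|^2, \|w\|^2)$. Thus any representation of a vector $s$ as a sum of two earlier points uses only points of strictly smaller norm. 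Applying this to the recursion, any genuinely new uniquely-representable sum produced at step $N$ has norm at least $M_{N-1}$, so the thresholds $M_N$ are nondecreasing; moreover, since a vector of norm exactly $M_{N-1}$ can only be built from strictly-smaller-norm points already present in $U_{N-1}$, its representability is decided by $U_{N-1}$ alone. Combining this with the structure theorem \cite[Thm.~1]{ULAMSEQ-ULAMSETS}, which tells us exactly which points eventually appear, every Ulam point of a given norm is uniquely representable from the points of smaller norm and therefore enters at a single step, with distinct norms corresponding bijectively to distinct steps. Hence $K(a,b)$ equals the number of distinct values $\|q\|$, taken over $q \in U_{\infty}$, with $\|q\| \leq \|av_0 + bv_1\|$.

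Second, I would pin down the norm scale of $L_n$ and count. Since $Q(a,b) := \|av_0 + bv_1\|^2$ is a positive-definite binary quadratic form we have $Q(a,b) \asymp a^2 + b^2$, and on the segment $a + b = n+1$ every admissible point has norm $\Theta(n)$ (the minimum over the segment and the endpoint maxima are each $\Theta(n)$, with bounded ratio). Using the formula for $\#_{\Ulam}(k)$ together with the fact that the inner Ulam points ($m,n \geq 3$ both odd) form a positive-density subset of the lattice, the number of Ulam points of norm at most $R$ is $\Theta(R^2)$; taking $R = \Theta(n)$ gives $\Theta(n^2)$ Ulam points of norm comparable to the scale of $L_n$. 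This immediately yields the upper bound $T_{\max}(n) \leq C_2 n^2$, and hence $T_{\min}(n) \leq T_{\max}(n) \leq C_2 n^2$, since by the reduction above the entry time of any point of $L_n$ is at most the total count of Ulam points up to its norm.

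The main obstacle is the lower bound, which requires passing from the count of Ulam points to the count of \emph{distinct} norms: I must show that only $\Theta(n^2)$ of the $\Theta(n^2)$ Ulam points below the scale of $L_n$ can collapse onto shared norm values. Equivalently, the level sets $\{Q(a,b) = \rho\}$ must contain a uniformly bounded number of admissible lattice points. For the prototypical golden case $v_0 = (1,\varphi)$, $v_1 = (\varphi, 1)$ this is clean: using $\varphi^2 = \varphi + 1$ gives $Q(a,b) = 2(a^2+b^2) + (a^2 + b^2 + 4ab)\varphi$, and since $1, \varphi$ are linearly independent over $\mathbb{Q}$, an equality $Q(a,b) = Q(a',b')$ forces both $a^2 + b^2 = a'^2 + b'^2$ and $ab = a'b'$, whence $\{a,b\} = \{a',b'\}$; thus every norm is shared by at most the two symmetric points $(a,b)$ and $(b,a)$, so the multiplicity is at most $2$. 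This bounds the coincidences and gives a distinct-norm count of at least $\tfrac{1}{2}\Theta(n^2) = \Theta(n^2)$, hence $T_{\min}(n) \geq C_3 n^2$ and $T_{\max}(n) \geq T_{\min}(n) \geq C_1 n^2$. For general first-quadrant $v_0, v_1$ the same conclusion reduces to a Diophantine statement on the representation multiplicities of the associated quadratic form, which I would handle by the analogous linear-independence argument over $\mathbb{Q}$ (or, failing a clean algebraic relation, an $O_{\varepsilon}(\rho^{\varepsilon})$ divisor-type bound refined to recover the exact quadratic order). Collecting both directions, every point of $L_n$ has entry time $\Theta(n^2)$, so $T_{\min}(n) = \Theta(n^2)$ and $T_{\max}(n) = \Theta(n^2)$, as claimed.
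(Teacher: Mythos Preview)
Your route is genuinely different from the paper's. The paper never makes your reduction that the entry time of a point equals the rank of its norm among all Ulam norms; instead it proceeds through two geometric intermediaries. Proposition~\ref{prop_PointEntryTimes_NthLineSegment} shows that the segments $L_k$ are completed essentially in order, with at most $O(n^2)$ points landing on later segments before $L_n$ is full, and Lemma~\ref{lemma_UpperBoundOnNumPointsAdded_AtStepN} asserts a uniform bound $C_{v_0,v_1}$ on the number of vectors added at any single step, argued by estimating how many of the parallel lines $L_{n+s}$ the circle $\|x\|=R$ can meet near the $v_0+v_1$ direction. The theorem then follows by summing $\#_{\Ulam}(k)$ over $k\le n$ and dividing by $1$ for the upper bound and by $C_{v_0,v_1}$ for the lower. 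Your norm-monotonicity observation absorbs the content of the proposition and delivers the upper bound more directly, while your multiplicity bound via the $\mathbb{Q}$-linear independence of $1,\varphi$ is the arithmetic counterpart of the paper's geometric bounded-points-per-step lemma---the two statements are equivalent, since ``at most $C$ Ulam points on any circle centred at the origin'' is exactly ``norm multiplicity at most $C$.'' For the golden vectors your argument is sharper, giving the exact constant $2$ where the paper's geometric estimate gives roughly $20$.

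One caution on your general-case sketch: an $O_\varepsilon(\rho^\varepsilon)$ divisor-type bound on multiplicities only yields a lower bound of order $n^{2-\varepsilon}$ on $T_{\min}(n)$, and there is no generic refinement recovering the full quadratic order. For $v_0=(1,0)$, $v_1=(0,1)$ the norm-squared is $a^2+b^2$, whose representation multiplicity is unbounded; by Landau's theorem the number of distinct Ulam norms below $R$ is only of order $R^2/\sqrt{\log R}$, so by your own reduction the strict $\Theta(n^2)$ lower bound actually fails for this pair. The paper's Lemma~\ref{lemma_UpperBoundOnNumPointsAdded_AtStepN} runs into the same obstruction. The gap you flagged is therefore real, but it is shared with the paper's argument rather than a defect of your method relative to it.
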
 

\begin{remark}[Interpretations and Implications of the Theorem]
Thus we are able to obtain 
asymptotic estimates for the time steps at which the vectors $av_0+bv_1 \in U_{\infty}$ 
(in the infinite Ulam set definition) enter the Ulam set by partitioning these points geometrically 
according to the $n^{th}$ line segments defined above to estimate the 
distribution of $K(a, b)$ on the $n^{th}$ line segment $L_n$, where we define 
$K(a, b)$ to be the value of the first step time $N$ where the vector $av_0+bv_1$ appears in the 
Ulam set $U_N$, but not in the set $U_{N-1}$. 
As it turns out, this is the most natural way to 
estimate the values in the timing plots in the figures 
without resorting to a much more complicated and chaotic two-parameter, 
three-dimensional plot timing construction for $K(a, b)$ where $a$ and $b$ are taken to be independent 
of one another. 

The timings on the $n^{th}$ line segments are a natural way to consider the entry times of 
individual vectors in the set since the $n^{th}$ line segments are approximately filled in completely one 
right after the other as the step times $N \geq 1$ increase, as we shall soon see in 
Proposition \ref{prop_PointEntryTimes_NthLineSegment} proved below. That is to say that the entry times of the 
vectors into the Ulam set are already naturally grouped into approximate time intervals by these line segments 
to begin with, so we simply estimate our timing statistics based on the points in these natural bins. 
A good heuristic explaining why we expect the asymptotic bound to be $\Theta(n^2)$ 
is that the number of points in the wedge on the line segments $L_k$ for $k=1,2,\ldots,n$ is quadratic in 
$n$, where we expect to add at most a bounded constant of $O(1)$ points at each distinct step 
(see Lemma \ref{lemma_UpperBoundOnNumPointsAdded_AtStepN} below) 
for a total of approximately a quadratic number of time steps needed to 
fill in $L_n$ completely. 
\end{remark} 

\begin{lemma}[Minimum and Maximum Magnitude Vectors] 
\label{lemma_MinMagVectors_OnLineSegment} 
Given all points on the $n^{th}$ line segment $L_n$, the minimum most magnitude 
vector on the line segment is in the boundary set $\{n v_0+v_1, v_0 + n v_1\}$ when 
$n$ is even, is given exactly by $\frac{n+1}{2}(v_0+v_1)$ when $n \equiv 1 \pmod{4}$, and is given exactly by 
$\frac{n+3}{2}v_0+\frac{n-1}{2}v_1 = \frac{n-1}{2}(v_0+v_1) + 2v_0$ when $n \equiv 3 \pmod{4}$. 
For all $n \geq 2$ the two vectors of the greatest (and not necessarily equal) magnitudes on the $n^{th}$ 
line segment are in the set $\{v_0 + n v_1, n v_0+v_1\}$. 
\end{lemma}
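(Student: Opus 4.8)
The plan is to reduce everything to a single convex quadratic. Any Ulam point of $L_n$ has the form $w = av_0 + bv_1$ with $a+b = n+1$, so it is pinned down by the single parameter $t := \tfrac{a-b}{2} = a - \tfrac{n+1}{2}$, which runs over a set of admissible values symmetric about $0$. First I would expand the squared Euclidean norm, substituting $a = \tfrac{n+1}{2}+t$ and $b = \tfrac{n+1}{2}-t$, to obtain
\begin{equation*}
\|w\|^2 = \left(\tfrac{n+1}{2}\right)^2 \|v_0+v_1\|^2 + (n+1)\,t\,\bigl(\|v_0\|^2 - \|v_1\|^2\bigr) + t^2\,\|v_0-v_1\|^2 .
\end{equation*}
Since $v_0,v_1$ are linearly independent the leading coefficient $\|v_0-v_1\|^2$ is strictly positive, so $\|w\|^2$ is a strictly convex upward parabola in $t$. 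For the prototypical vectors $\{v_0,v_1\}=\{(1,\varphi),(\varphi,1)\}$ (and likewise for $\{(1,0),(0,1)\}$) one has $\|v_0\|=\|v_1\|$, so the linear term drops out and the vertex sits exactly at $t=0$, i.e.\ at the barycentric point $a=b=\tfrac{n+1}{2}$.

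Next I would settle the maximum. The extreme admissible pair is always the boundary pair $(n,1),(1,n)$, giving $t=\pm\tfrac{n-1}{2}$; by the structure theorem these are Ulam points of $L_n$ for every $n$. Since the vertex lies at the center $t=0$ and the admissible $t$-values are symmetric, convexity forces the two values farthest from the vertex — namely $t=\pm\tfrac{n-1}{2}$ — to be the two largest, so the two greatest-magnitude vectors are $nv_0+v_1$ and $v_0+nv_1$; the swap $v_0\leftrightarrow v_1$ makes them equal in the symmetric case, while the ``not necessarily equal'' caveat covers initial data where the vertex is still interior but off-center. For the minimum, again by convexity the minimizer is the admissible point whose parameter $t$ is closest to the vertex $t=0$. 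When $n$ is even the structure theorem leaves only the two boundary points on $L_n$, so the minimum is forced into $\{nv_0+v_1,\,v_0+nv_1\}$. When $n$ is odd the admissible points are exactly those with $a,b$ both odd (representation (I)), so $a$ ranges over the odd integers in $[1,n]$, and the nearest admissible $a$ to the center $s:=\tfrac{n+1}{2}$ is controlled by the parity of $s$: for $n\equiv 1\pmod 4$ the center $s$ is odd, $a=s$ is admissible, and the minimizer is $\tfrac{n+1}{2}(v_0+v_1)$; for $n\equiv 3\pmod 4$ the center $s$ is even, the nearest odd coordinate is $s\pm 1$, and the minimizer is $\tfrac{n+3}{2}v_0+\tfrac{n-1}{2}v_1 = \tfrac{n-1}{2}(v_0+v_1)+2v_0$ together with its mirror image.

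The expansion in the first paragraph is routine, and the convexity arguments for the two extremes are immediate once the parabola is in hand. The only genuine subtlety — and the step I would treat most carefully — is the interaction between the continuous optimum at the vertex and the arithmetic constraint that admissible interior points have both coordinates odd. This parity constraint is precisely what produces the clean $\bmod\,4$ dichotomy for the minimizer, and it is also where I must invoke the structure theorem at the right moment: to rule out interior points entirely when $n$ is even, and to certify that the nearest odd lattice coordinate genuinely corresponds to an Ulam point rather than a merely geometric point of the segment $L_n$.
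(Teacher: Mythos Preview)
Your argument is correct and cleaner than the paper's. Both proofs parametrize the Ulam points on $L_n$ by a single integer (your $t=\tfrac{a-b}{2}$ coincides with the paper's $d$ from representation~(III)), but where you write $\|w\|^2$ directly as an upward parabola in $t$ via the decomposition along $v_0+v_1$ and $v_0-v_1$ and read the extrema off from the vertex and endpoints, the paper instead works in the skew basis $\{v_0+v_1,\,v_0\}$, expands the successive differences $\|w_d\|^2-\|w_{d+1}\|^2$ through the polarization identity, and argues monotonicity away from the middle ``for $n$ large.'' Your choice of basis is what makes the quadratic transparent, and the single convexity observation handles the minimum and the maximum simultaneously rather than through separate difference computations. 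You also isolate explicitly the hypothesis $\|v_0\|=\|v_1\|$ that pins the vertex to $t=0$; this is exactly what the specific minimum vectors named in the statement require, and the paper's difference argument leaves the point implicit under the phrase ``for $n$ large.'' The paper's route buys nothing extra here---yours is more elementary and more honest about the standing assumptions.
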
 
\begin{proof} 
It is obvious that when $n$ is even, we only have two points on the $n^{th}$ segment $L_n$, so both claims 
are trivially true in this case. In the remaining two cases when $n$ is odd we can prove the claims by considering 
inequalities for large $n$. For odd $n$, we can write all vectors on the $n^{th}$ line segment in the form of 
$$w_{d} := \frac{(n+1-2d)}{2}(v_0+v_1) + 2dv_0,\ 
  \text{ for some $d \in \left[-\frac{n-1}{2}, \frac{n-1}{2}\right]$. }$$ 
The distinction between the two odd cases for $n$ modulo $4$ comes into play since 
when $n \equiv 1 \pmod{4}$ there are an odd number of vectors on the $n^{th}$ line segment, and when 
$n \equiv 3 \pmod{4}$ there are an even number of vectors on the $n^{th}$ line segment where we are 
essentially considering the middle-most vectors on the segment for our candidates for the minimum 
magnitude vectors on the segment. 

Let $v_0 := (x_0, y_0)$, $v_1 := (x_1, y_1)$, and let $\vartheta_{001}$ denote the angle between the vectors 
$v_0$ and $v_0+v_1$. 
Suppose that $n \equiv 1 \pmod{4}$. Then we have claimed that the minimum magnitude vector on the 
$n^{th}$ line segment is given by $w_0$. Since $x_0, x_1, y_0, y_1 \geq 0$ we can show 
that for large $n$ we have that $||w_0|| \leq ||w_1||$ and that $||w_0|| \leq ||w_{-1}||$. 
Now suppose that $d \geq 2$ and consider the differences of the magnitudes of the vectors $w_d$ and $w_{d+1}$: 
\begin{align*} 
||w_d||^2 & - ||w_{d+1}||^2 = \frac{(n+1-2d)^2}{4} ||v_0+v_1||^2 + 4d^2 ||v_0||^2 + 
     2d(n+1-2d) ||v_0|| \cdot ||v_0+v_1|| \cos(\vartheta_{001}) \\ 
     & \phantom{=\ } - 
     \left[ 
     \frac{(n-1-2d)^2}{4} ||v_0+v_1||^2 + 4(d+1)^2 ||v_0||^2 + 
     2(d+1)(n-1-2d) ||v_0|| \cdot ||v_0+v_1|| \cos(\vartheta_{001}) 
     \right],\ \\ 
     & \phantom{\qquad} \text{ by the polarization identity } \\ 
     & = 
     (n-2d) ||v_0+v_1||^2 - 4(2d+1) ||v_0||^2 - (2n-8d-2) ||v_0|| \cdot ||v_0+v_1|| \cos(\vartheta_{001}) \\ 
     & < 0,\ \text{ for $n$ large. } 
\end{align*} 
So when we increase the $d \geq 2$ the magnitude of the corresponding vector increases in magnitude. 
Similarly, for $d \geq 0$ we can easily show that the difference $||w_d|| - ||w_{d-1}|| < 0$. 
Thus we have proved our result in the first case of odd $n$. A similar modified argument shows that the 
second statement is also true. The inequalities we have established for the minimum magnitude vector cases 
show that the two maximum magnitude vectors on the $n^{th}$ line segment for $n$ odd are 
$\{v_0 + n v_1, n v_0+v_1\}$. 
\end{proof} 

\begin{prop}[Order of Point Entry Times on the $n^{th}$ Line Segment] 
\label{prop_PointEntryTimes_NthLineSegment}
For $n \geq 2$, the vectors on the $n^{th}$ line segment $L_n$ are filled in 
completely before the vectors in the $(n+1)^{th}$ line segment $L_{n+1}$ are filled in completely. 
The number of Ulam set points filled in on the $k^{th}$ segments $L_k$ for $k > n$ before the 
$n^{th}$ line segment is filled in completely is $O(n^2)$. 
\end{prop}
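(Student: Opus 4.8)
The plan is to establish a single monotonicity principle and then read off both assertions from it. The cornerstone is the observation that the minimal-norm thresholds $M_N$ in the recursive definition of $U_N$ are non-decreasing in $N$, so that vectors enter the Ulam set in non-decreasing order of Euclidean norm. To see this, I first note that because $v_0, v_1 \in \mathbb{R}^2_{\geq 0}$, every sum $v+w$ of two vectors in the wedge spanned by $v_0$ and $v_1$ satisfies $\|v+w\| \geq \max(\|v\|, \|w\|)$, since each coordinate of $v+w$ dominates the corresponding coordinate of $v$ and of $w$. Consequently any new sum formed at step $N$ has norm at least $M_{N-1}$: a sum using a freshly added vector (of norm exactly $M_{N-1}$) cannot decrease the norm, while a sum of two older vectors was already available at the previous step. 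Hence $M_N \geq M_{N-1}$, every vector of $U_N \setminus U_{N-1}$ has norm exactly $M_{N-1}$, and in particular a vector of strictly larger norm enters at a strictly later time step.

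Given this, the first assertion is immediate from Lemma \ref{lemma_MinMagVectors_OnLineSegment}. The two greatest-magnitude vectors on $L_n$ are the boundary vectors $nv_0+v_1$ and $v_0+nv_1$, so the completion time of $L_n$ is precisely the entry time of whichever of these two has the larger norm, all other vectors on $L_n$ having strictly smaller norm and therefore entering strictly earlier. Because adding a further copy of $v_0$ or $v_1$ to a wedge vector strictly increases its norm, the greatest-magnitude vectors $(n+1)v_0+v_1$ and $v_0+(n+1)v_1$ of $L_{n+1}$ have strictly larger norm than their $L_n$ counterparts. By the monotonicity principle the last vector of $L_n$ therefore enters strictly before the last vector of $L_{n+1}$, which is exactly the statement that $L_n$ is completely filled before $L_{n+1}$ is.

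For the second assertion I bound the total number of vectors that can possibly have entered by the time $L_n$ is complete. At that moment every already-present vector has norm at most $\max(\|nv_0+v_1\|, \|v_0+nv_1\|)$, which by Lemma \ref{lemma_MinMagVectors_OnLineSegment} is $O(n)$. It therefore suffices to count lattice points $av_0+bv_1$ with $a,b \geq 0$ of norm $O(n)$. A compactness argument on the simplex $\{a+b=1,\ a,b \geq 0\}$ produces a constant $c>0$ with $\|av_0+bv_1\| \geq c(a+b)$, so any such point satisfies $a+b = O(n)$, and the number of nonnegative integer pairs $(a,b)$ in this range is $O(n^2)$. Since the Ulam points are a subset of these lattice points, at most $O(n^2)$ of them have entered before $L_n$ is filled; in particular at most $O(n^2)$ of them lie on segments $L_k$ with $k>n$, as claimed.

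The main obstacle is making the monotonicity principle airtight in the presence of the unique-representability condition built into $M_N$: I must rule out the possibility that a vector which is skipped because it is not uniquely representable later re-enters at a smaller norm. This cannot happen, because adding vectors can only create additional representations of a fixed target and never destroy existing ones, so the set of representations of any given point only grows; combined with the norm non-decrease of sums in the positive wedge, this guarantees that the entry order never violates the norm ordering. Once this point is secured, both halves of the proposition reduce to Lemma \ref{lemma_MinMagVectors_OnLineSegment} together with the routine lattice-point count above.
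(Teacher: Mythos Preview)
Your argument is correct, and in one respect more complete than the paper's: you isolate and prove the monotonicity principle (vectors enter $U_N$ in non-decreasing order of norm), which the paper uses implicitly without justification. Your treatment of the unique-representability issue is also careful and correct.

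For the first assertion your route coincides with the paper's: both compare the maximal-norm boundary vectors on $L_n$ with those on $L_{n+1}$ via Lemma~\ref{lemma_MinMagVectors_OnLineSegment}. For the second assertion you take a genuinely different and cleaner path. The paper bounds, by a case analysis modulo $4$, the number $m$ of later segments $L_{n+k}$ whose minimal-norm vector can undercut the maximal-norm vector on $L_n$, shows $m = O(n)$, and multiplies by $O(n)$ points per segment. You instead bound the total number of Ulam points of norm at most $\max(\|nv_0+v_1\|,\|v_0+nv_1\|) = O(n)$ by a single lattice-point count: linear independence of $v_0,v_1$ and compactness of the simplex give $\|av_0+bv_1\| \geq c(a+b)$, forcing $a+b = O(n)$ and hence $O(n^2)$ integer pairs. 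This avoids the segment-by-segment case analysis entirely, at the cost of not identifying which later segments are actually touched. One small quibble: the $O(n)$ bound on $\|nv_0+v_1\|$ is the triangle inequality, not Lemma~\ref{lemma_MinMagVectors_OnLineSegment} as you write.
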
 
\noindent 
\textbf{Restatement: } 
The proposition is restated in more formal notation as follows: 
For all $n \geq 2$ there exists a smallest time step $N \equiv N(n)$ such that 
$U_N \cap L_n = L_n$ but where $U_N \cap L_{n+k} \subsetneq L_{n+k}$ for all $k \geq 1$. 
Moreover, for each $n \geq 2$ and its minimal time step $N(n)$ we have that\footnote{ 
     This bound is likely significantly suboptimal, though it is sufficient to 
     complete the proof of the theorem as given below. 
} 
$$\sum_{k \geq 1} |U_{N(n)} \cap L_{n+k}| = O(n^2).$$
\begin{proof}
Let $n \geq 2$ be fixed. Since the maximum magnitude vector on the $n^{th}$ 
line segment is given by $v_0+nv_1$ (cf.\ Lemma \ref{lemma_MinMagVectors_OnLineSegment}), and 
we have that for all $k \geq 1$ the norms satisfy $||v_0+nv_1||_2 < ||v_0+(n+k)v_1||_2$, i.e., that 
\begin{equation*} 
||v_0+nv_1||_2 < ||v_0+(n+1)v_1||_2 < ||v_0+(n+2)v_1||_2 < \cdots, 
\end{equation*} 
we see that the first of the results stated above is true. 
By Lemma \ref{lemma_MinMagVectors_OnLineSegment}, to prove the 
second result we must bound the maximum $m$ of the 
minimums of each of the following sets by $O(n)$ where 
$m := \max \{m_{14}, m_{34}, m_{02}\}$ in the notation below: 
\begin{align*} 
m_{14} & := \min\ \left\{k \in \mathbb{Z}^{+}: \left\lVert \frac{(n+k+1)(v_0+v_1)}{2} \right\rVert - 
     ||v_0+nv_1||_2^2 > 0 \text{ and } n+k \equiv 1 \pmod{4}\right\} \\ 
m_{34} & := \min\ \left\{k \in \mathbb{Z}^{+}: \left\lVert \frac{(n+k-1)(v_0+v_1)}{2} + 2v_0\right\rVert - 
     ||v_0+nv_1||_2^2 > 0 \text{ and } n+k \equiv 3 \pmod{4}\right\} \\ 
m_{02} & := \min\ \left\{k \in \mathbb{Z}^{+}: ||(n+k)v_0+v_1||_2^2 - ||v_0+nv_1||_2^2 > 0 
     \text{ and } n+k \equiv 0 \pmod{2}\right\}. 
\end{align*} 
Then we have points in the later $k^{th}$ segments with $k > n$ for a maximum of $m$ incompletely 
filled line segments in the Ulam set where each of these segments is filled with $O(n)$ points. 
Thus to complete the proof we must show that $m = O(n)$. 
We illustrate our method for doing so by considering the third set case in the equations above. 
More precisely, if we let $v_0 := (x_0, y_0)$ and $v_1 := (x_1, y_1)$, 
we require that for some minimal $k \geq 1$ we have that 
\[
((n+k)x_0+x_1)^2 + ((n+k)x_0+x_1)^2 - \left[(x_0+nx_1)^2 + (y_0+ny_1)^2\right] > 0. 
\] 
When we reduce this inequality we see that the previous equation 
implies that $1 \leq k \leq O(n)$. The other two cases follow similarly by computation. 
Hence the total number of points in the subsequent line segments is bounded by 
$m \cdot O(n) = O(n^2)$.
\end{proof} 

\begin{lemma}[Upper Bound on the Number of Points Added at Step $N$] 
\label{lemma_UpperBoundOnNumPointsAdded_AtStepN}
For sufficiently large $N$, the 
maximum number of vectors added to the finite Ulam set at step $N$ is some bounded 
integer-valued constant, $C_{v_0,v_1}$, which depends only on the inner angle of the 
wedge formed by the fixed initial vectors $v_0, v_1$ in the first quadrant and the 
magnitudes of $v_0, v_1$, i.e., we have that 
$$\sup_{N \geq 1} \left(|U_N| - |U_{N-1}|\right) = C_{v_0,v_1} < \infty,$$
exists and is finite for each fixed initial vector pair $(v_0,v_1)$. 
\end{lemma}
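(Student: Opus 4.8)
The plan is to reduce the count of points added at a single step to a problem about equal-norm lattice points in the wedge, and then to bound that count uniformly. First I would record the defining feature of the construction: every vector placed into $U_N \setminus U_{N-1}$ has norm exactly $M_{N-1}$, so all vectors added at a given step lie on a single circle of radius $M_{N-1}$ centred at the origin. Since each such vector is an Ulam point, it has the form $a v_0 + b v_1$ with $(a,b)$ an admissible integer pair from the structure theorem, and it lies in the wedge $W$ spanned by $v_0$ and $v_1$, whose opening angle $\vartheta_{01} := \angle(v_0, v_1)$ satisfies $\vartheta_{01} \le \tfrac{\pi}{2}$ because $v_0, v_1 \in \R_{\ge 0}^2$. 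Writing $Q(a,b) := \lVert a v_0 + b v_1 \rVert_2^2$ for the positive-definite binary quadratic form induced by $(v_0,v_1)$, this yields
\[
|U_N| - |U_{N-1}| \;\le\; \#\bigl\{ (a,b) \in \mathbb{Z}_{\ge 0}^2 \cap W : Q(a,b) = M_{N-1}^2 \bigr\},
\]
and the uniqueness filter in the definition of $M_{N-1}$ can only discard candidates, never create them, so it does not affect this upper bound. It therefore suffices to show that $\sup_{r>0}\#\{(a,b) : Q(a,b)=r\}$ is finite, with the value controlled by $v_0,v_1$.

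Second, I would attempt a geometric packing estimate. Ordering the equal-norm solutions by angular coordinate places them in strictly convex position on the arc $\{x \in W : \lVert x\rVert = \sqrt r\}$. Consecutive solutions differ by vectors of the lattice $\Lambda = \mathbb{Z}v_0 + \mathbb{Z}v_1$, and for any three consecutive solutions the two edge vectors $e, e'$ turn strictly to the left, so $\det(e,e')$ is a positive integer multiple of the covolume $\Delta := \lvert \det(v_0,v_1)\rvert$ and hence $\det(e,e') \ge \Delta$. Summing the turning angles along the arc and using that the total turning is bounded by $\vartheta_{01} \le \tfrac{\pi}{2}$ then bounds the number of solutions. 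Here lies the main obstacle: the chord lengths $\lVert e\rVert$ can be as large as $2\sqrt r$, so the per-vertex estimate $\sin(\mathrm{turn}) \ge \Delta/(\lVert e\rVert\,\lVert e'\rVert)$ degrades like $r^{-1}$ and produces a count of size $O(r)$ rather than $O(1)$. This degradation is not merely an artefact: for arithmetically special pairs such as $v_0=(1,0)$, $v_1=(0,1)$ the number of equal-norm Ulam points on a circle is genuinely unbounded (it is essentially a sum-of-two-odd-squares count), so no purely metric argument depending only on $\vartheta_{01}$ and $\lVert v_0\rVert, \lVert v_1\rVert$ can close the gap.

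Third, I would close the gap using the Diophantine non-degeneracy of $Q$ enjoyed by the pairs of interest. Writing $Q(a,b) = \lVert v_0\rVert^2 a^2 + 2\langle v_0,v_1\rangle\, ab + \lVert v_1\rVert^2 b^2$, whenever the three coefficients $\lVert v_0\rVert^2,\ \langle v_0,v_1\rangle,\ \lVert v_1\rVert^2$ are linearly independent over $\mathbb{Q}$ the equation $Q(a,b)=Q(a',b')$ forces $(a^2,ab,b^2)=(a'^2,a'b',b'^2)$, hence $(a',b')=(a,b)$ and $C_{v_0,v_1}=1$; for the symmetric golden pair $(v_0,v_1)=((1,\varphi),(\varphi,1))$ one has $\lVert v_0\rVert^2=\lVert v_1\rVert^2$ and a short computation with $\varphi^2=\varphi+1$ reduces $Q(a,b)=Q(a',b')$ to $\{a,b\}=\{a',b'\}$, so the only coincidences are mirror pairs and $C_{v_0,v_1}=2$. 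In either regime the equal-norm count is uniformly bounded, which with the first-step reduction gives $\sup_{N\ge 1}\bigl(|U_N|-|U_{N-1}|\bigr) = C_{v_0,v_1} < \infty$.

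Finally, as a more self-contained alternative to the arithmetic input, I would record a second route built on the results already available: by Lemma \ref{lemma_MinMagVectors_OnLineSegment} the norm is unimodal along each segment $L_n$, so the circle of radius $M_{N-1}$ meets each $L_n$ in at most two points, and by Proposition \ref{prop_PointEntryTimes_NthLineSegment} the newly added minimal-norm vectors are concentrated on the segments currently being completed. Bounding the number of such active segments independently of $n$ is precisely the point at which the non-degeneracy of $(v_0,v_1)$ must re-enter, and I expect making this active-segment count uniform to be the delicate step that any fully rigorous proof must confront.
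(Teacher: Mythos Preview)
Your opening reduction coincides with the paper's: all vectors added at step $N$ share the norm $M_{N-1}$, so one is counting Ulam lattice points on a single circle. From there the paper follows the route of your final paragraph rather than your third: it uses that the circle meets each segment $L_m$ in at most two points and tries to bound the number of segments the circle crosses. Concretely it fixes reference points $P_1,P_2$ where the circle meets the lines $v_i+\lambda(v_0+v_1)$ and a point $P_3$ just beyond the circle on the ray through $v_0+v_1$, and asserts that the segment count is at most $8\|P_3-P_1\|/\min(\|v_0\|,\|v_1\|)$, which it shows stays bounded as $R\to\infty$.

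The concern you raise in your second paragraph lands squarely on this step. The lines $v_i+\lambda(v_0+v_1)$ bound the thin strip $|a-b|\le 1$, so $\|P_3-P_1\|$ only sees the portion of the arc inside that strip; the long flanks with $a\gg b$ or $b\gg a$, where the segment index $a+b$ sweeps through $\Theta(R)$ values, are simply not accounted for. Your sum-of-two-odd-squares remark confirms that the obstruction is real: for $v_0=(1,0),\,v_1=(0,1)$ there are circles carrying arbitrarily many Ulam points (for instance $a^2+b^2=2\cdot5\cdot13\cdot17\cdot29$ already has sixteen ordered odd--odd solutions), so the lemma as stated fails there and no bound depending only on $\vartheta_{01},\|v_0\|,\|v_1\|$ can succeed. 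Your third paragraph is therefore the correct repair: under the generic hypothesis that $\|v_0\|^2,\ \langle v_0,v_1\rangle,\ \|v_1\|^2$ are $\mathbb{Q}$-independent you get $C_{v_0,v_1}=1$, and the symmetric golden pair gives $C_{v_0,v_1}=2$, which covers the examples the paper actually treats. The arithmetic input you supply is what the argument genuinely needs; the paper's purely metric estimate does not close the gap you identify.
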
 
\begin{proof} 
Suppose that at some step $N$, $w_1$ is a minimal length vector to be added to the Ulam set 
with minimal $y$-coordinate. 
We need to count and bound the number of Ulam set points on the quarter circle in the 
first quadrant of radius $||w_1||_2$ whose $y$ components are strictly larger than that of $w_1$. 
We start the proof with exact estimates of the magnitudes of two proposed vectors on this quarter circle 
(assuming for the sake of argument that there are multiple minimum length vectors at step $N$) and then 
proceed to consider the approximate and asymptotic bounds on the parameters defining these vectors. 

If we suppose that our first minimal magnitude vector $w_N := (n+1-b)v_0+bv_1$ at time step $N$ is of 
minimal $y$-coordinate, then we may bound our constant at time step $N$ by the number of 
integer solutions $(t, s) \in \mathbb{Z}^2$ with $t = 0, 1, 2, \ldots, s$ to the equation 
\[
||(n+1-b)v_0+bv_1||_2 = ||(n+1+s - (b+t))v_0 + (b+t)v_1||_2, 
\] 
i.e., the number of integer solution pairs to the equation 
\[
s^2 ||v_0||^2 + t^2 ||v_1-v_0||^2 + 2st ||v_0|| \cdot ||v_1-v_0|| \cos(\vartheta_0-\vartheta_{01}) = 
     4 ||w_N||^2 \cos(\vartheta_w-|\vartheta_0-\vartheta_{01}|), 
\]
which is easier said than done \emph{exactly}. However, to count the number of solutions $s$ where 
$t \in [0, s]$, we note that we can alternately bound the number of times the quarter circle of radius 
$R := ||w_N||$ crosses a distinct line segment $L_{n+s}$, which leads to the upper bound of 
$4 \cdot \#(s)$ since there is a square root in the previous equation and since the quarter circle may 
intersect each $L_{n+s}$ it crosses at most twice. We then estimate $\#(s)$ by considering three points 
on the quarter circle: 1) $P_1$: the intersection with the line between $v_0$ and $v_0+v_1$, 
i.e., the line parameterized by $v_0+\lambda_0(v_0+v_1)$; 
2) $P_2$: the intersection with the line $v_1+\lambda_1(v_0+v_1)$; and 3) $P_3$: the point of longest distance from 
$v_0+v_1$ on the quarter circle, which we can intentionally overshoot by defining 
\[
P_3 := \frac{v_0+v_1}{||v_0+v_1||}\left(||v_0+v_1||+R\right). 
\]
Without loss of generality, we consider the distance between $P_1$ and $P_3$ and then divide through by the 
minimum of the magnitudes of $v_0,v_1$ (the approximate distance between the $L_n$) 
to obtain the desired upper bound for the number of line segments 
crossed by the quarter circle. In this case, we have that 
\[
\lambda_0 = \frac{\sqrt{R^2 - ||v_0||^2 - 2||v_0|| \cdot ||v_0+v_1|| \cos(\vartheta_0-\vartheta_{10})}}{||v_0+v_1||}, 
\]
where $\vartheta_{10}$ denotes the angle formed between the $x$-axis and the vector $v_0+v_1$. 
Then our upper bound on the constant $C_{v_0,v_1}$ is given by 
\[
C_{v_0,v_1} \leq \frac{8 \left\Vert \frac{v_0+v_1}{||v_0+v_1||}\left(||v_0+v_1||+R\right) - 
     \left(v_0+\lambda_0(v_0+v_1)\right) \right\rVert}{\min\ \{||v_0||, ||v_1||\}} 
     \xrightarrow{n \longrightarrow \infty}
     \frac{8}{\min\ \{||v_0||, ||v_1||\}}\left(||v_0+v_1|| + ||v_0||\right),  
\] 
where we have included an additional factor of $2$ in the upper bound due to possible symmetry in the initial vectors 
which can lead to two vectors on the same line segment $L_{n+s}$ having the same minimal magnitude. 
\end{proof} 

\begin{example}
To verify that our formula is in the ballpark of the expected largest constant, we plug-in our known 
prototypical vector case of $v_0,v_1 := (1, 0), (0, 1)$ to the formula above and maximize with respect to 
$R$. This leads us to the estimate that 
\begin{align*} 
C_{v_0,v_1} & \leq \frac{8}{\sqrt{2}} \left\lVert (1, 1) (R+\sqrt{2}) - \left(\sqrt{R^2-3}+\sqrt{2}, \sqrt{R^2-3}\right) 
     \right\rVert \\ 
     & = \frac{8}{\sqrt{2}} \sqrt{\left(R-\sqrt{R^2-3}\right)^2+\left(R-\sqrt{R^2-3}+\sqrt{2}\right)^2} \\ 
     & \leq 8 \sqrt{4+\sqrt{6}} \approx 20.3167, \text{ which occurs for $R := \sqrt{3}$. } 
\end{align*} 
Our empirical observations of this initial vector case place the constant at 
$C_{v_0,v_1} = 12$, so we see that we have obtained a 
realistic estimate for finite $N$ with this bound. 
If we require that $||v_0||,||v_1|| \in [1, Q]$ for some bounded real $Q > 1$, 
a requirement which we may impose by scaling, we can estimate that $C_{v_0,v_1} \leq 24 \cdot Q$ to 
obtain another upper bound on the number of vectors that can be added to an Ulam set at 
any prescribed time step $N$. 
\end{example} 

\begin{proof}[Proof of Theorem \ref{conj_UlamSetTimings_v1}] 
We begin by considering the maximum mean case where we determine exact 
bounds on the order of the function $T_{\max}(n)$ defined above over $n$. 
To reach the points in the $n^{th}$ line segment for $n \geq 2$ we have 
a finite number of vectors in the $k^{th}$ previous line segments to cover 
for $0 \leq k < n$. By Proposition \ref{prop_PointEntryTimes_NthLineSegment} 
we have that there are $2$ points on the $k^{th}$ line segment if 
$k$ is even and $(k+1)/2$ points on the $k^{th}$ line segment if $k$ is odd. 

\noindent 
\textbf{Upper Bound Estimate on $T_{\max}(n)$: } 
In the worst case analysis, 
we assume that we only add one vector to the finite Ulam set at each step $N$ and 
that the number of vectors filled in on the $k^{th}$ segments for $k > n$ when the 
last vector is filled in is bounded by $O(n^2)$ according to 
Proposition \ref{prop_PointEntryTimes_NthLineSegment}. 
Then to reach the last vector on the $n^{th}$ segment, by the proposition it 
takes $N$ steps where 
\begin{align*} 
N_n & = \sum_{k=0}^{n-1} \#_{\Ulam}(k) + 
     \begin{cases} 
     2 & \text{ if $n$ is even; } \\ 
     \frac{n+1}{2} & \text{ if $n$ is odd. } 
     \end{cases} + O(n^2) \\ 
     & = 
     \sum_{k=0}^{\left\lfloor \frac{n-1}{2} \right\rfloor} 2 + 
     \sum_{k=0}^{\left\lfloor \frac{n-1}{2} \right\rfloor} \frac{k+1}{2} + 
     \begin{cases} 
     2 & \text{ if $n$ is even; } \\ 
     \frac{n+1}{2} & \text{ if $n$ is odd. } 
     \end{cases} + O(n^2) \\ 
     & = 
     \frac{1}{4}\left(\left\lfloor \frac{n-1}{2} \right\rfloor+1\right) 
     \left(\left\lfloor \frac{n-1}{2} \right\rfloor+10\right) + 
     \begin{cases} 
     2 & \text{ if $n$ is even; } \\ 
     \frac{n+1}{2} & \text{ if $n$ is odd. } 
     \end{cases} + O(n^2) \\ 
     & = 
     O(n^2). 
\end{align*} 
\noindent 
\textbf{Lower Bound on $T_{\max}(n)$: } 
In the best case analysis, we have by 
Lemma \ref{lemma_UpperBoundOnNumPointsAdded_AtStepN} that we can add 
at most constant $C_{v_0,v_1}$ number independent of $N$ and $n$ 
of vectors to the growing finite Ulam set at each step $k$ for $k \geq N_0 \geq 1$, 
where we may assume for the sake of obtaining a lower bound that the $n^{th}$ 
line segment is filled in 
completely before any points on the $k^{th}$ line segments for $k > n$. 
Thus to reach the last vector on the $n^{th}$ segment in this case we 
require $N$ steps where 
\begin{align*} 
N_n & \geq \sum_{k=N_0}^{n-1} O\left(\frac{\#_{\Ulam}(k)}{C_{v_0,v_1}}\right) + 
     \begin{cases} 
     O\left(\frac{2}{C_{v_0,v_1}}\right) & \text{ if $n$ is even; } \\ 
     O\left(\frac{n+1}{2 C_{v_0,v_1}}\right) & \text{ if $n$ is odd. } 
     \end{cases} = O(n^2). 
\end{align*} 
Hence $T_{\max}(n) = \Theta(n^2)$. The computation for the minimum mean case is 
similar. 
\end{proof} 

\section{Conclusions} 

We hope that this collection of data on tiling statistics and suggestions of theoretical methods to compute limiting distributions inspires further study in this area. We believe new methods will be needed to study many of the tilings, particularly those without obvious symmetries (or hidden symmetries in the case of quasicrystals). 
The tilings project website provides forms for interactive side-by-side 
comparisons of our plot data beyond the similarities between related 
tilings we suggest in this article. 
Our data is publicly available and freely usable, however, 
we do request that any use of the data cites this work and the website where the data is hosted.

\section*{Acknowledgments} 

The author would like to thank Jayadev S. Athreya at the University of Washington in Seattle for 
suggesting the problems in the article, for funding to work on the tilings software project, and for 
his guidance throughout the project. 

\renewcommand{\refname}{References}

\setcounter{section}{1}
\renewcommand{\thesection}{\Alph{section}}

\begin{sidewaysfigure} 
\centering
\begin{minipage}[b]{0.95\linewidth}
\includegraphics[scale=0.75,frame]{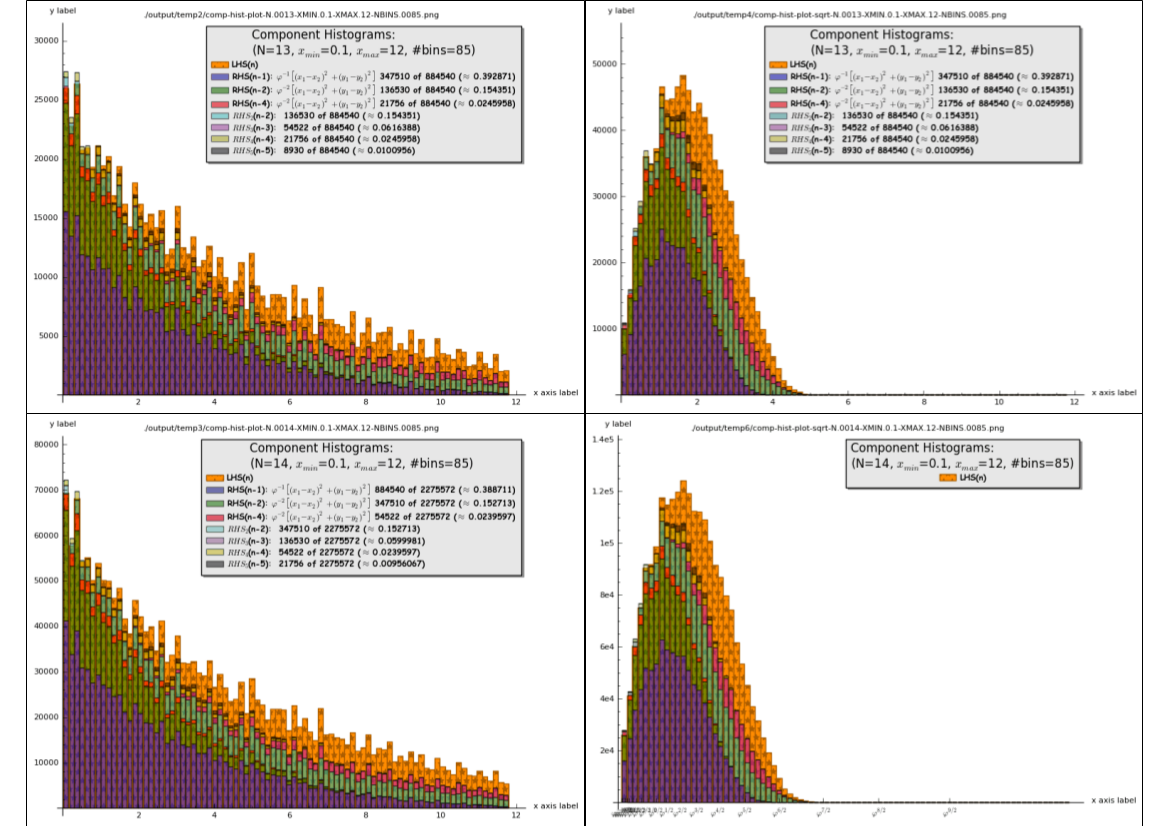} 
\end{minipage} 

\caption{Stacked Pair Correlation Plots for $N : 13$ and $N := 14$. 
Key: 
\textit{Orange} = Full Distribution (for visual comparision), 
\textit{Blue} = $\RHS(n-1) := \varphi^{-1} \widetilde{D}_{n-1}$, 
\textit{Green} = $\RHS(n-2) := \varphi^{-2} \widetilde{D}_{n-2}$, 
\textit{Red} = $\RHS(n-4) := \varphi^{-4} \widetilde{D}_{n-4}$, 
\textit{Turquoise} = $\RHS_2(n-2)$, 
\textit{Pink} = $\RHS_3(n-3)$, 
\textit{Yellow} = $\RHS_4(n-4)$, and 
\textit{Gray} = $\RHS_5(n-5)$. 
} 
\label{figure_AmmannChair_StackedPCPlots}
\end{sidewaysfigure} 

\begin{figure}[ht]
\centering 

\begin{minipage}{0.85\linewidth} 
\centering\includegraphics[clip,height=0.27\textheight,width=\linewidth,keepaspectratio,frame]{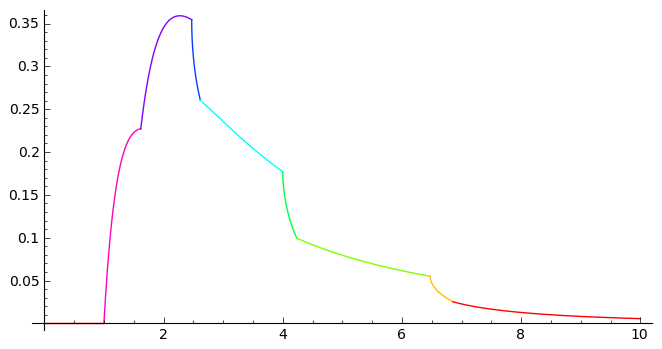}
\caption*{The piecewise smooth empirical distribution for the 
          slope gaps of saddle connections on the 
          golden L (vectors of slope at most $1$ and horizontal component 
          less that $10^{4}$).} 
\end{minipage} 

\bigskip\bigskip\bigskip

\begin{minipage}{0.85\linewidth} 
\centering\includegraphics[clip,height=0.27\textheight,width=\linewidth,keepaspectratio,frame]{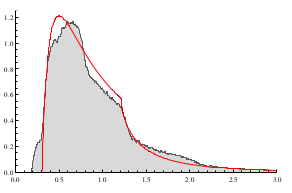}
\caption*{Empirical distribution of the slope gaps of the points in a long 
          patch of the \texttt{TubingenTriangle} tiling 
          (approximately $1.5 \times 10^6$ points). 
          The red comparision curve corresponds to 
          \emph{Hall's distribution} for the slope gap distribution of the 
          two-dimensional integer lattice.} 
\end{minipage} 

\bigskip\bigskip

\caption{Empirical slope gap distributions for the \texttt{SaddleConnGoldenL} 
         and \texttt{TubingenTriangle} tilings (permission to reproduce the 
         second image from \cite{TUBINGEN-GAPS} given by T. Jakobi)} 
\label{figure_EmpSlopeGapDist_SConnGoldenL_TT_tilings} 

\end{figure} 

\begin{sidewaysfigure} 
\centering
\begin{minipage}[b]{0.95\linewidth}
\includegraphics[scale=0.75,frame]{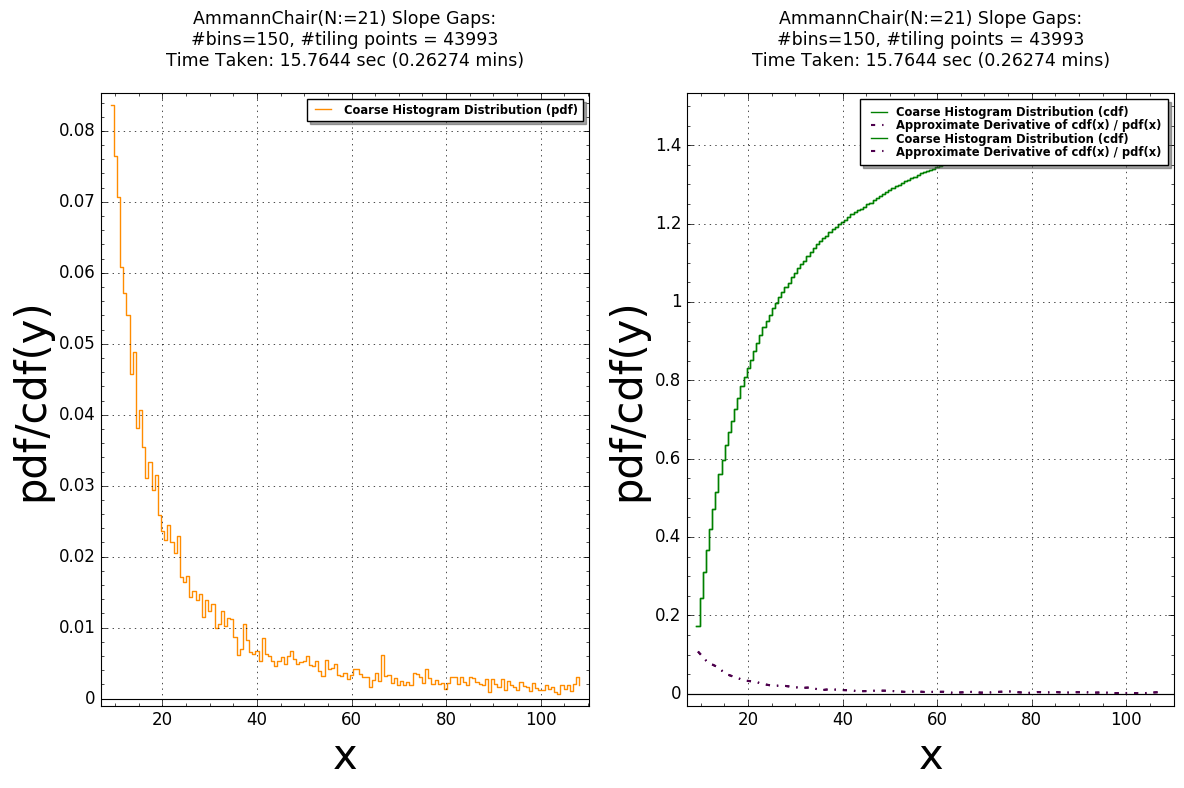} 
\end{minipage} 

\caption{Empirical slope gap distribution of the \texttt{AmmannChair} tiling 
         after $N := 21$ substitution steps} 
\label{figure_AmmannChair_EmpiricalSlopeGapDist}
\end{sidewaysfigure} 

%

%

\begin{figure}[h] 

\bigskip\hrule\bigskip 

\centering 
\begin{minipage}{\linewidth} 

\begin{minipage}{0.5\linewidth} 
\centering\includegraphics[trim={0 0 85 60},clip,height=2.2in,width=\linewidth,keepaspectratio,frame]{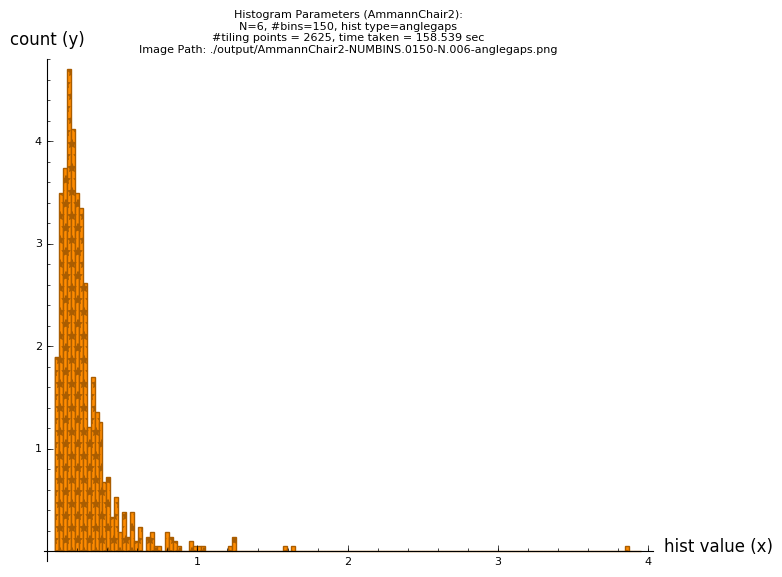}
\caption*{\texttt{AmmannChair2} Angle Gaps} 
\end{minipage} 
\begin{minipage}{0.5\linewidth} 
\centering\includegraphics[trim={0 0 85 60},clip,height=2.2in,width=\linewidth,keepaspectratio,frame]{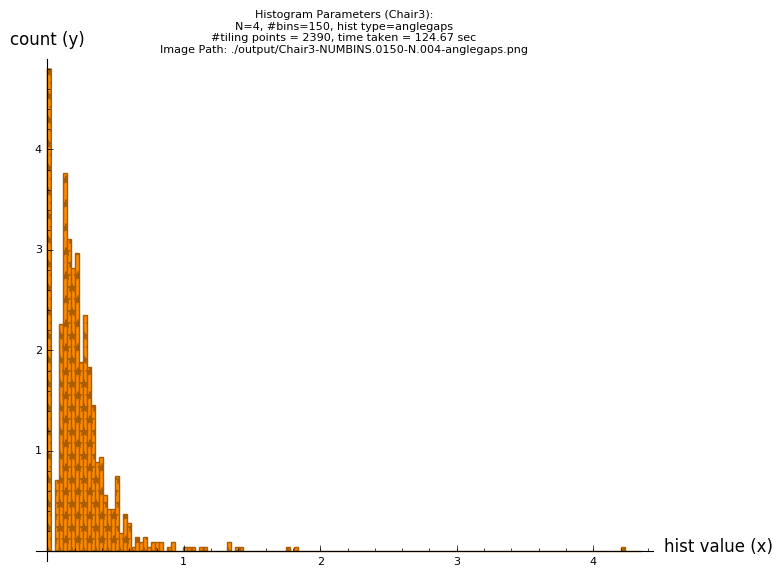}
\caption*{\texttt{Chair3} Angle Gaps} 
\end{minipage} 

\smallskip 

\end{minipage} 

\begin{minipage}{\linewidth} 

\begin{minipage}{0.5\linewidth} 
\centering\includegraphics[trim={0 0 85 60},clip,height=2.2in,width=\linewidth,keepaspectratio,frame]{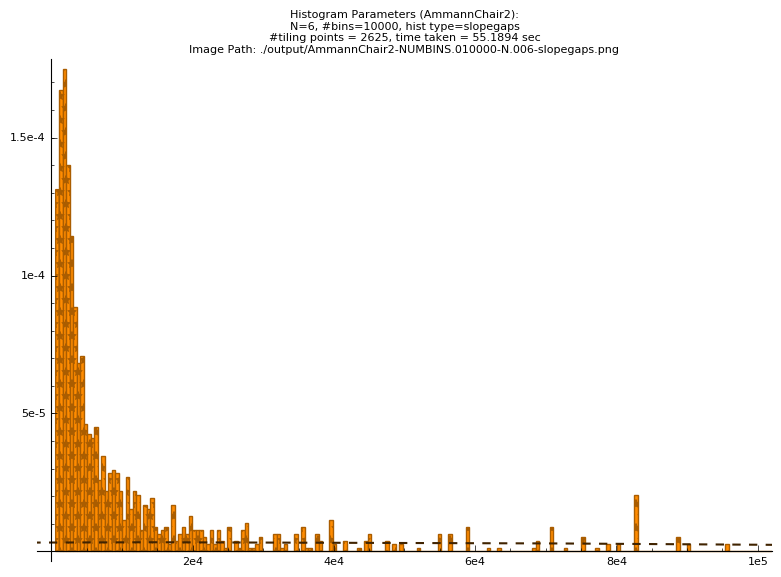}
\caption*{\texttt{AmmannChair2} Slope Gaps} 
\end{minipage} 
\begin{minipage}{0.5\linewidth} 
\centering\includegraphics[trim={0 0 85 60},clip,height=2.2in,width=\linewidth,keepaspectratio,frame]{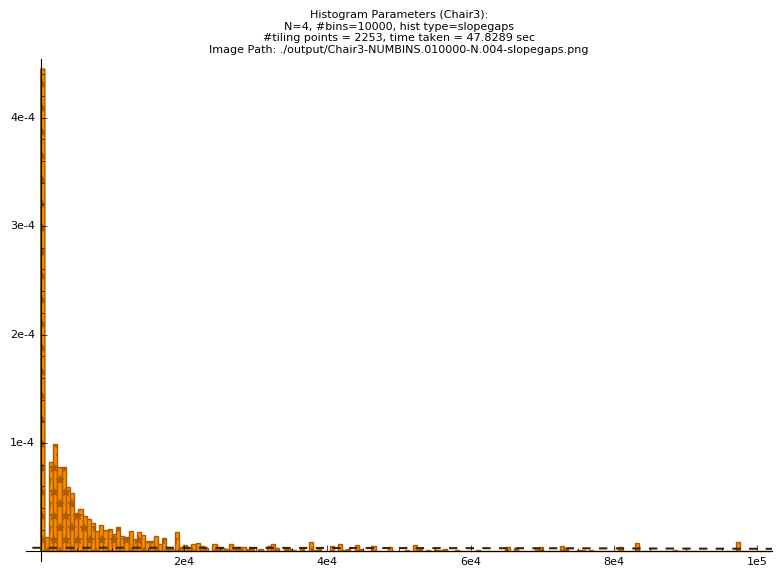}
\caption*{\texttt{Chair3} Slope Gaps} 
\end{minipage} 

\smallskip 

\begin{minipage}{\linewidth} 

\begin{minipage}{0.5\linewidth} 
\centering\includegraphics[trim={0 0 85 60},clip,height=2.2in,width=\linewidth,keepaspectratio,frame]{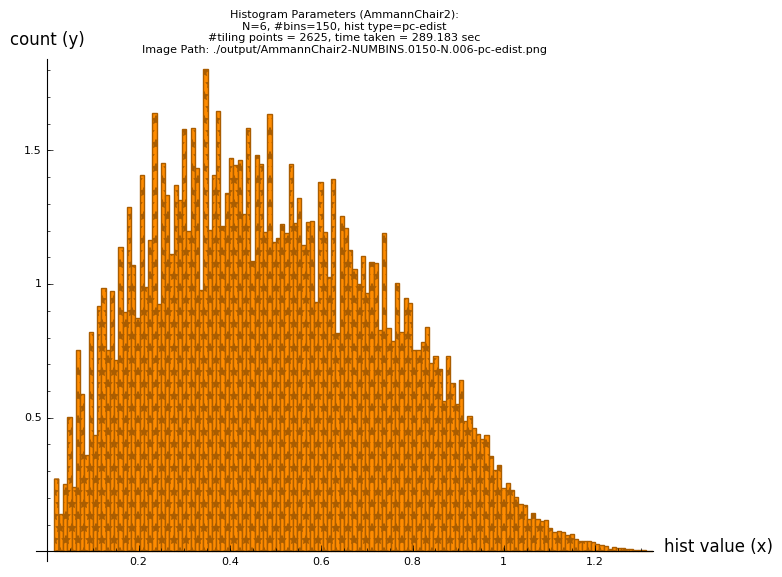}
\caption*{\texttt{AmmannChair2} Pair Correlation} 
\end{minipage} 
\begin{minipage}{0.5\linewidth} 
\centering\includegraphics[trim={0 0 85 60},clip,height=2.2in,width=\linewidth,keepaspectratio,frame]{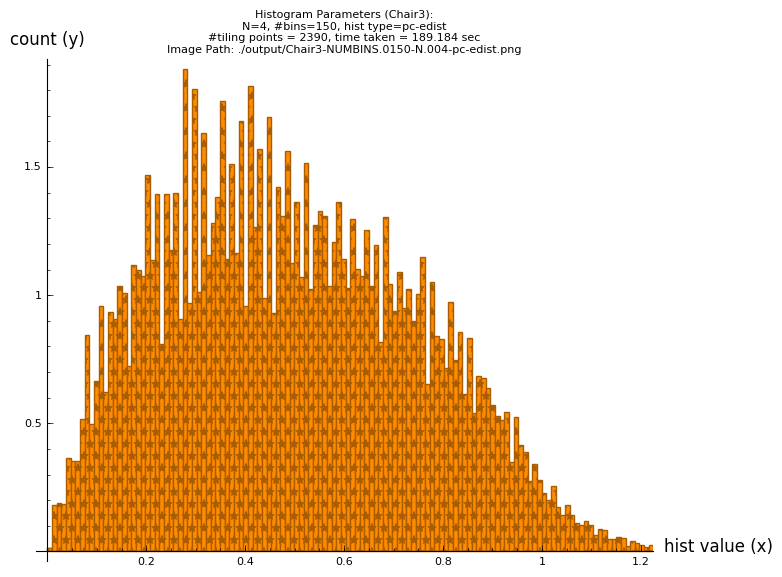}
\caption*{\texttt{Chair3} Pair Correlation} 
\end{minipage} 

\end{minipage} 

\end{minipage}

\caption{A Comparision of the Empirical Distributions of the 
         \texttt{AmmannChair2} ($N := 6$) and the 
         \texttt{Chair3} ($N := 4$) Tilings} 
\label{figure_AmmannChair2_Chair3_Comps} 
\label{figure_ChairTilineVariants_DistsSummary}

\bigskip\hrule\bigskip 

\end{figure}

\begin{figure}[ht] 

\bigskip\hrule\bigskip 

\centering 
\begin{minipage}{\linewidth} 

\begin{minipage}{0.5\linewidth} 
\centering\includegraphics[trim={0 0 85 60},clip,height=2.2in,width=\linewidth,keepaspectratio,frame]{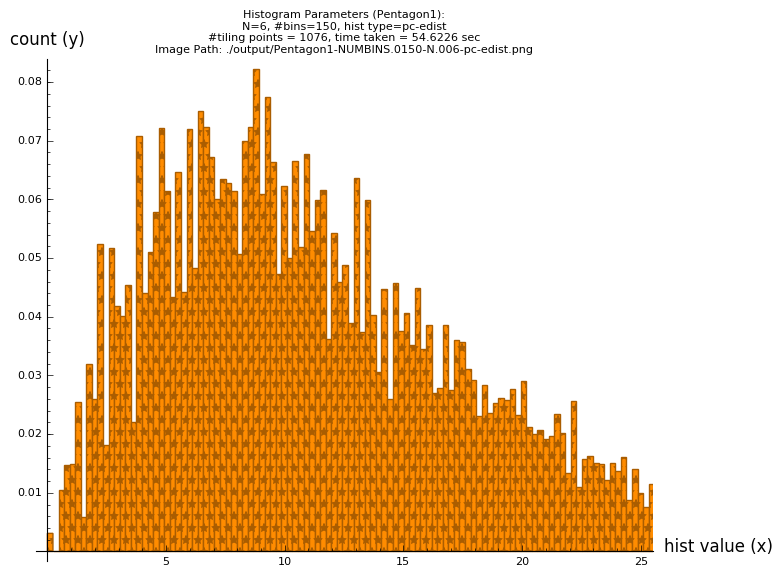}
\caption*{\texttt{Pentagon1} Pair Correlation} 
\end{minipage} 
\begin{minipage}{0.5\linewidth} 
\centering\includegraphics[trim={0 0 85 60},clip,height=2.2in,width=\linewidth,keepaspectratio,frame]{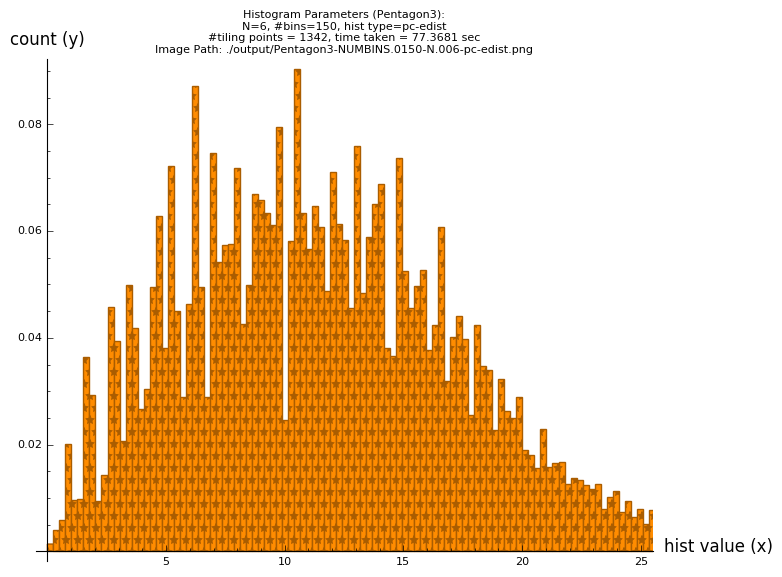}
\caption*{\texttt{Pentagon3} Pair Correlation} 
\end{minipage} 

\smallskip 

\end{minipage} 

\begin{minipage}{\linewidth} 

\begin{minipage}{0.5\linewidth} 
\centering\includegraphics[trim={0 0 85 60},clip,height=2.2in,width=\linewidth,keepaspectratio,frame]{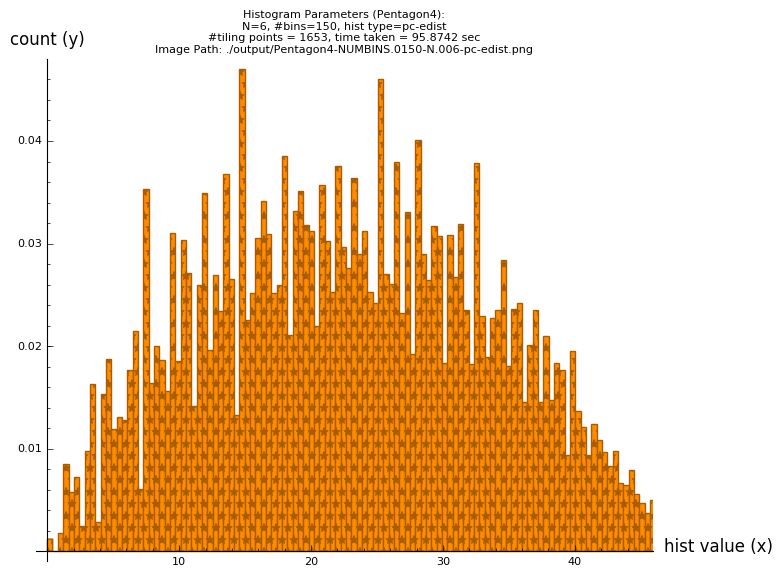}
\caption*{\texttt{Pentagon4} Pair Correlation} 
\end{minipage} 
\begin{minipage}{0.5\linewidth} 
\centering\includegraphics[trim={0 0 85 60},clip,height=2.2in,width=\linewidth,keepaspectratio,frame]{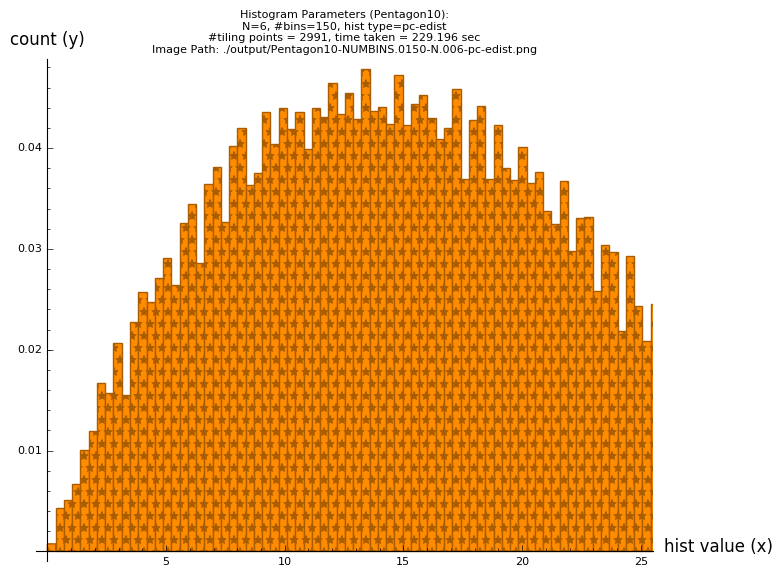}
\caption*{\texttt{Pentagon10} Pair Correlation} 
\end{minipage} 

\smallskip 

\begin{minipage}{\linewidth} 

\begin{minipage}{0.5\linewidth} 
\centering\includegraphics[trim={0 0 85 60},clip,height=2.2in,width=\linewidth,keepaspectratio,frame]{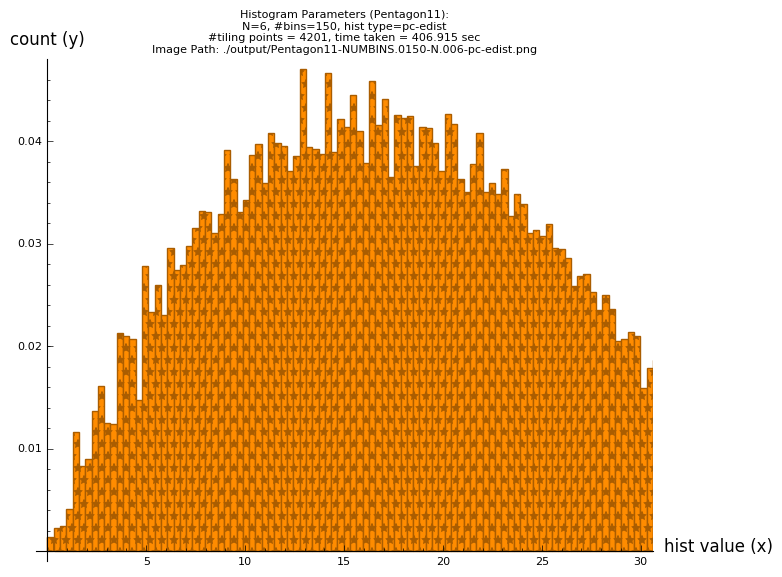}
\caption*{\texttt{Pentagon11} pc-edist} 
\end{minipage} 
\begin{minipage}{0.5\linewidth} 
\centering\includegraphics[trim={0 0 85 60},clip,height=2.2in,width=\linewidth,keepaspectratio,frame]{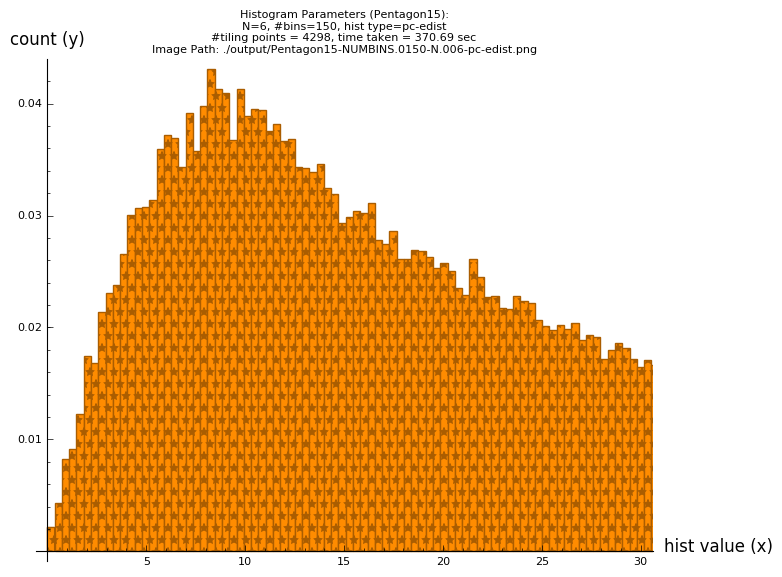}
\caption*{\texttt{Pentagon15} Pair Correlation} 
\end{minipage} 

\end{minipage} 

\end{minipage} 

\caption{A Comparison of the Pair Correlation Plots for a Subset of the Pentagon Tilings} 
\label{figure_PCDists_PentagonTilings_Comp}

\bigskip\hrule\bigskip 

\end{figure} 

\begin{figure}[ht]  

\centering  
\begin{minipage}{0.95\linewidth} 
\centering\includegraphics[height=0.44\textheight,keepaspectratio,frame]{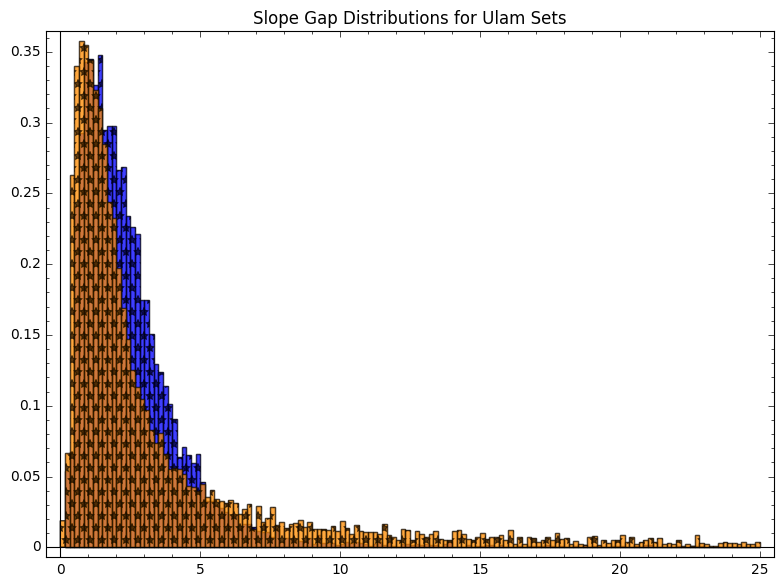}
\end{minipage} 

\centering  
\begin{minipage}{0.95\linewidth} 
\centering\includegraphics[height=0.44\textheight,keepaspectratio,frame]{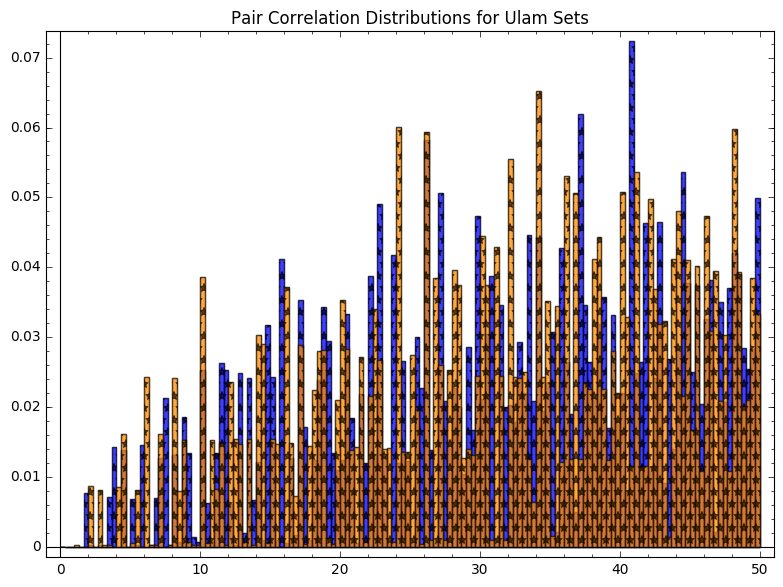}
\end{minipage}

\caption{A Comparison of the slope gap and pair correlation 
         empirical distributions where the slope gaps are scaled by a factor of the wedge angle 
         between $v_0, v_1$ for $v_0,v_1 = (0, 1), (1, 0)$ (\emph{orange}) and 
         $v_0,v_1 = (1, \varphi), (\varphi, 1)$ (\emph{blue}) when $N:= 5000$} 
\label{figure_proto_UlamSetDists_v1}

\end{figure}

\end{document}